\address{\newline{\normalsize Max Planck Institute for Mathematics, Vivatsgasse 7, 53111 Bonn, Germany}
\newline{\it E-mail address}: karzhema@mpim-bonn.mpg.de}
\makeatletter\@addtoreset{equation}{section}\makeatother
\makeatletter\@addtoreset{subsection}{equation}\makeatother
\newtheorem{theorem}[equation]{Theorem}
\newtheorem{proposition}[equation]{Proposition}
\newtheorem{lemma}[equation]{Lemma}
\newtheorem{corollary}[equation]{Corollary}
\newtheorem{question}[equation]{Question}
\newtheorem{theorem-definition}[equation]{Theorem-definition}
\theoremstyle{definition}
\newtheorem{example}[equation]{Example}
\newtheorem*{notation}{Notation}
\theoremstyle{remark}
\newtheorem{remark}[equation]{Remark}
\newcommand{\dia}{\frak{D}}
\newcommand{\cg}{\textbf{Cr}}
\newcommand{\li}{{\bf\frak{G}}}
\newcommand{\slo}{{\bf\mathcal{E}}}
\newcommand{\com}{\mathbb{C}}
\newcommand{\f}{\mathbb{F}}
\newcommand{\p}{\mathbb{P}}
\newcommand{\re}{\mathbb{R}}
\newcommand{\ra}{\mathbb{Q}}
\newcommand{\cel}{\mathbb{Z}}
\newcommand{\na}{\mathbb{N}}
\newcommand{\au}{\textbf{UAut}}
\newcommand{\aut}{\textbf{Aut}}
\newcommand{\su}{\mathrm{Supp}}
\newcommand{\kr}{\mathrm{Ker}}
\newcommand{\vol}{\mathrm{Vol}}
\newcommand{\di}{\mathrm{dist}}
\thanks{{\it MS 2010 classification}: 14E07, 14R10, 20F69}
\thanks{{\it Key words}: Cremona group, affine automorphism, non\,-\,simplicity}
\begin{document}

\title{Combinatorics of affine birational maps}

\author{Ilya Karzhemanov}

\begin{abstract}
The main object of study in the present paper is the group $\au_n$
of \emph{unimodular automorphisms} of $\com^n$. Taking $\au_n$ as
a working example, our intention was to develop an approach (or
rather an edifice) which allows one to prove, for instance, the
non\,-\,simplicity of $\au_n$ for all $n \geq 3$. More systematic
and, perhaps, general exposition will appear elsewhere.
\end{abstract}

\sloppy

\maketitle

\bigskip

\section{Introduction}
\label{section:introduction}

The impetus for the present paper was the article \cite{bergman}
in which the study of combinatorics of certain birational
automorphisms of ${\bf k}^n$ was applied to answer a
group\,-\,theoretic question.\footnote{~Throughout the paper, if
not stated otherwise, ${\bf k}$ is an algebraically closed field
of characteristic $0$ and ${\bf k}^n$ is the $n$\,-\,dimensional
affine space.} More specifically, given $f$ from the \emph{Cremona
group} $\cg_n$ of birational automorphisms of ${\bf k}^n$, the
combinatorics of $f$ we have in mind is encoded (somehow) in a set
of lattice points or rather a polytope, which comes for free with
each $f\in\cg_n$. These discrete gadgets are constructed, as used
to be common now, by fixing a (non\,-\,archimedean) valuation on
the field of rational functions of ${\bf k}^n$ and applying this
valuation to the components of various maps $f\in\cg_n$. Or,
heuristically, one ``\,brings the action of $f$ on ${\bf k}^n$ to
infinity\," (see \cite{dan}, \cite{gromov-pol-growth},
\cite{gromov-2}, \cite{kapranov}, \cite{grisha}, \cite{max-yura}
for related matters).

We would like to apply the preceding point of view to study
polynomial automorphisms of ${\bf k}^n$. Recall that the group
$\aut_n$ of such automorphisms carries a structure of an
infinite\,-\,dimensional algebraic group (see \cite{shaf}). Then,
as an algebraic group, $\aut_n$ is generated by the group of
affine linear automorphisms of ${\bf k}^n$ and by the group of
triangular automorphisms of ${\bf k}^n$ (see \cite[Theorem
4]{shaf}). The group $\aut_n$ is also non\,-\,simple because of
the Jacobi map $\det : \aut_n \longrightarrow {\bf k}^*$. On the
other hand, the kernel $\au_n := \kr(\det)$ is simple as an
algebraic group (see \cite[Theorem 5]{shaf}), but is not that as
an abstract group for $n = 2$ (see \cite{danilov}). The aim of the
present paper is to extend the latter result to the case of
arbitrary $n \geq 3$:

\begin{theorem}
\label{theorem:main} The group $\au_n$ is non\,-\,simple (as an
abstract group) for all $n \geq 3$.
\end{theorem}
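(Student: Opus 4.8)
The plan is to exhibit directly a proper, nontrivial normal subgroup of $\au_n$, rather than to construct a homomorphism onto an abelian quotient (the latter route being blocked by the perfectness of $\au_n$, which one expects to inherit from its simplicity as an algebraic group). Following the philosophy of the introduction, I would fix a monomial valuation $\nu$ ``at infinity'' on the field ${\bf k}(x_1, \ldots, x_n)$ and attach to each $f \in \au_n$ a combinatorial gadget $\limb(f)$, namely the convex hull of the values of $\nu$ on the coordinate components of $f$ and of $f^{-1}$, i.e.\ a Newton-type polytope recording the leading behaviour of $f$ along the boundary divisor of a suitable compactification. Unimodularity ($\det = 1$) should force a normalization constraint on $\limb(f)$, a $\vol$-- or lattice-width identity, which is precisely the discrete shadow we want to exploit.

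The first step is a combinatorial calculus for $\limb$. I would prove a subadditivity law describing $\limb(f \circ g)$ in terms of $\su \limb(f)$ and $\su \limb(g)$: a Minkowski-type containment that becomes an equality exactly when no cancellation occurs between the two leading parts. From this one extracts a length $\ell(f) := \vol \limb(f)$ (or a lattice-width), submultiplicative under composition, together with a criterion for a composition to be \emph{reduced}. The crux is to locate a single automorphism $g \in \au_n$, a shift-like or generalized H\'enon map of Jacobian $1$, for which all powers are reduced, so that $\ell(g^k)$ grows linearly and $g$ behaves as a loxodromic isometry for the coarse metric $\di(f, h) := \ell(f^{-1}\circ h)$.

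The second step is to geometrize: the calculus for $\limb$ should organize $\au_n$ into an action on a Gromov-hyperbolic space (or a $\mathrm{CAT}(0)$ polytope complex) on which $\ell$ is the stable translation length. For the chosen $g$ I would then verify a weak-proper-discontinuity condition by a purely combinatorial estimate, showing that an element almost fixing a long segment of the axis of $g$ must have its polytope $\limb$ forced into a bounded region, hence lies in a finite set. The acylindrical-hyperbolicity and small-cancellation machinery, \`a la Dahmani--Guirardel--Osin, then produces for $N$ large a proper normal subgroup containing $g^N$ but not $g$, which yields Theorem~\ref{theorem:main}.

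The main obstacle I anticipate is exactly the verification of the no-cancellation and weak-proper-discontinuity estimates in dimension $n \geq 3$, where the amalgamated free product structure underlying Danilov's two-dimensional argument is unavailable. Everything therefore rests on the combinatorics of how the supports $\su \limb$ of the building blocks overlap under composition; controlling this overlap, ruling out the collapse of the polytope under generic words in $g^{\pm 1}$, is where the substantive work lies, and it is here that the lattice-point edifice advertised in the introduction must do the job that Bass--Serre trees did for $n = 2$.
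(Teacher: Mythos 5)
Your proposal is a program, not a proof: the two steps on which everything depends --- the construction of a Gromov-hyperbolic space (or $\mathrm{CAT}(0)$ complex) on which $\au_n$ acts non-elementarily, and the verification that some explicit $g$ is a loxodromic WPD element for that action --- are precisely the points you defer to ``purely combinatorial estimates'' that you never supply, and they are not routine: they are the open problem in dimension $n \geq 3$. For $n = 2$ these ingredients come for free from the Bass--Serre tree of the Jung--van der Kulk amalgam (this is what underlies Danilov's proof), and for the rank-$2$ Cremona group from the action on the Picard--Manin space; for the full group $\au_n$ with $n \geq 3$, which contains wild automorphisms, no substitute is known, and the Newton-polytope calculus you sketch does not produce one. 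In particular the subadditivity/no-cancellation control you posit for $\limb(f \circ g)$ is exactly what breaks down in higher dimension: degrees, and any polytope-volume length $\ell$, can collapse under composition in ways that admit no reduced-word theory, so the Dahmani--Guirardel--Osin machinery has no point of entry. Acknowledging this obstacle in your final paragraph does not remove it; as written, the argument establishes nothing for $n \geq 3$.

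It is worth seeing how the paper sidesteps exactly this difficulty: it never makes $\au_n$ act on anything hyperbolic. Instead it embeds the problem into an auxiliary group $\li_n$ of \emph{birational} maps of $\p^n$ (the kernel of the valuation-induced homomorphism $\rho : G \to \gl_n(\cel)$ of Corollary~\ref{theorem:rho-hom}), arranged so that $\f_2 \subseteq \mathrm{Out}(\li_n)$. The normal subgroup $N$ of $\li_n$ is generated by the set $G_e$ of elements whose conjugacy class is fixed by every element of $\f_2$; its properness is proved not by small cancellation but by a quasi-isometry argument on trees (Proposition~\ref{theorem:m-2-good-generator}) combined with an explicit $\Lambda$ whose stabilizer $E_{\Lambda}$ is cyclic (Proposition~\ref{theorem:not-invariant}), so that $\Lambda \notin N$. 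Finally the proper normal subgroup is transported into the automorphism group along the surjection $\xi : \slo_n \twoheadrightarrow \au_{n-1}$ of Proposition~\ref{theorem:sub-group-surj}. The contrast with your plan is that all the hard geometry happens in a group of birational maps, where the outer $\f_2$-action is available, and only at the very end is the conclusion pushed down to $\au_{n-1}$; your route requires doing the hyperbolic geometry inside $\au_n$ itself, which is the step nobody knows how to carry out.
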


To prove Theorem~\ref{theorem:main}, we introduce a subgroup $G
\subset \cg_n$ that ``\,looks like\," a subgroup in $SL_n(\cel)$
when brought to infinity --- according to what we have said at the
beginning (see Section~\ref{section:1-0-n} for the construction of
$G$). Though the presence of $G$ might be interesting and
important on its own (see for example
Proposition~\ref{theorem:g-is-not-embeddable}), we focus on one of
its subgroups, namely $\li_n \subset G$, instead (see
Corollary~\ref{theorem:rho-hom}). For technical reasons, one
should also consider an ``\,enlargement\,'' of $\li_n$, which we
denote $\widetilde{\li_n}$ (see
Section~\ref{section:theorem-group-e-n}). One of the crucial
features of $\widetilde{\li_n}$ is provided by
Proposition~\ref{theorem:sub-group-surj}. Up to this end all
considerations employ only elementary algebra/combinatorics of
polynomials in $n$ variables.

Proposition~\ref{theorem:sub-group-surj} is enough to prove
Theorem~\ref{theorem:main} provided that $\widetilde{\li_n}$
contains \emph{sufficiently many} normal subgroups. The latter
turns out to be the case after we introduce a subset $G_e$ of
generators of $\widetilde{\li_n}$ in
Section~\ref{section:intermedia-group}. More specifically, $G_e$
is stable under the conjugation and inversion in
$\widetilde{\li_n}$, and generates a normal subgroup $N \ne
\widetilde{\li_n}, \{1\}$. Here we apply (seem to be standard) an
argument \`a la geometric group theory, based on the notion of
\emph{quasi\,-\,isometry},\footnote{~The results of
Section~\ref{section:intermedia-group} were motivated by (and are
a group\,-\,theoretic counterpart of) the \emph{Splitting theorem}
for compact Lorentz manifolds with isometric
$SL_2(\mathbb{R})$\,-\,action (cf. \cite[\S 4]{ambra-gromov} and
Question~\ref{theorem:stupid} below).} together with simple
(asymptotic) properties of birational automorphisms of ${\bf k}^n$
(see Section~\ref{section:proof-of-theorem}).

In conclusion, let us say that despite a recent progress in
understanding the structure of $\cg_n$, presumably for $n \leq 3$
(see \cite{can-lam}, \cite{prok}, \cite{serre-1}), this group
still remains a mysterious creature and far more questions about
it are currently out of reach (see for example \cite{serre-1} for
a modest account of some of these). We hope that
Theorem~\ref{theorem:main} and the methods used to prove it will
shed the light on some part of the Cremona group (see
Section~\ref{section:theorem-proofs} for further discussion).
\\

\begin{notation}

Throughout the paper we use the following notation and
conventions:

\begin{itemize}

\item $\p^n$ is the projective space with coordinates $\left[X_0\,:\,\cdots\,:\,X_n\right]$.
We denote by $S:={\bf
k}\left[X_0,\ldots,X_n\right]_{\mathrm{hom}}$ the semigroup of
homogeneous polynomials in the ring ${\bf
k}\left[X_0,\ldots,X_n\right]$.

\smallskip

\item We fix the lattice $\cel^{n + 1}$ with the basis dual to
$\{X_0,\ldots,X_n\}$. We also fix the sublattice
$\cel^n\subset\cel^{n + 1}$ corresponding to $\{X_1,\ldots,X_n\}$.
Both $\cel^{n + 1}$ and $\cel^n$ are equipped with the standard
lexicographical order for which $X_0 \geq X_1 \geq\ldots\geq X_n$.

\smallskip

\item We set $X := (X_1,\ldots,X_n)$, $X^I := X_1^{i_1}
\ldots X_n^{i_n}$ for $I\in\cel^n$, $I := (i_1,\ldots,i_n)$. ${\bf
M}_n(R)$ denotes the set of all $(n \times n)$\,-\,matrices $M$
with entries $M_{i,j}\in R$, $1\leq i,j\leq n$, in a ring $R$.

\smallskip

\item Given $h\in{\bf
k}\left[X_0,\ldots,X_n\right]$ we set $d_h$ to be the degree of
$h$ in $X_0$. We denote by $\su~h$ the support of $h$ (i.\,e.
$\su~h$ is the collection of all monomials that appear in $h$ with
non\,-\,zero coefficients). We will identify $\su~h\subset \cel^{n
+ 1}$ with the dual set of monomials and denote by $I_h$ the
maximal vector among those $I\in\cel^n$ with $(d_h, I)\in\su~h$.
We also put $\left<h\right> := (d_h, I_h)$ (thus $\left<h\right>$
is the monomial $X_0^{d_h}X^{I_h}$).

\smallskip

\item Every $f\in\cg_n$ (and, more generally, every rational self\,-\,map of $\p^n$) is represented by an $(n + 1)$\,-\,tuple
$[f_0\,:\,\cdots\,:\,f_n]$ of (not necessarily coprime)
polynomials $f_0,\ldots,f_n\in {\bf
k}\left[X_0,\ldots,X_n\right]_{\mathrm{hom}}$. In particular, if
all $f_i$ are coprime, then $f$ is uniquely determined by
$[f_0\,:\,\cdots\,:\,f_n]$.

\smallskip

\item $f \circ g$ (or $fg$, or $f \cdot g$) denotes
the composition $f(g)$ of two rational self\,-\,maps of $\p^n$.

\smallskip

\item For a group $G$ and any $a_1, a_2, b \in G$, we put $a_i^b :=
ba_ib^{-1} $, $C_{a_i} := \{a_i^b\}_{b \in G}$ (the conjugacy
class of $a_i$), and write $a_1 \sim a_2$ if $a_1\in C_{a_2}$. $N
\vartriangleleft G$ signifies that $N$ is a normal subgroup in $G$
such that $N \ne G, \{1\}$ ($1\in G$ is the unit element).

\smallskip

\item We denote by $\f_2$ the free group in two generators ($\f_2$
always comes with the word metric with respect to a fixed set of
generators). We will also use standard notions and facts from the
geometric group theory (see e.\,g. \cite{gromov-2}). For instance,
given two metric spaces $X$ and $Y$, $X \sim_{\text{q.-i.}} Y$ (or
$X$ is q.\,-\,i. to $Y$) signifies that $X$ is quasi\,-\,isometric
to $Y$.

\end{itemize}

\end{notation}

\bigskip

\section{Preliminaries}
\label{section:1-0-n}

\refstepcounter{equation}
\subsection{The set-up}

We take ${\bf k} = \com$ for simplicity. Consider $f\in\cg_n$
given by some $f_0,\ldots,f_n\in S$ (we assume that $n \ge 3$ in
what follows). Suppose that
\begin{eqnarray}
\label{eq-1-1-0-n} f_0 = \alpha_0X_0^{d_f}X^{I_{f_0}} + \sum_{k
\geq 1}X_0^{d_f - k}F_{k}(X),\\
\nonumber f_j = \alpha_{j, 0}X_0^{d_f-1}X^{I_{f_j}} + \sum_{k \geq
1}X_0^{d_f - 1 - k}F_{j, k}(X)
\end{eqnarray}
for all $j \geq 1$ and some $d_f\in\na$, where $\alpha_0,
\alpha_{j,0} \in\com^*$, $F_k(X), F_{j, k}(X)\in
\com\left[X_1,\ldots,X_n\right]_{\mathrm{hom}}=S$ (cf. the {\bf
Notation} above). Note that the condition $d_{f_0} - d_{f_j} = 1$
(for all $j \geq 1$) is satisfied by every $(n + 1)$-tuple
$(f^*_0, \ldots, f^*_n)$ such that $hf^*_i = f_i$ for all $i$ and
any (fixed) $h \in S$ (with $f_i$ replaced by $f^*_i$ in $d_{f_i}$
for all $i$). In particular, we may take $f_i$ to be coprime,
$0\leq i\leq n$, so that \eqref{eq-1-1-0-n} is a property of the
map $f$.

Let us also assume that $f^{-1}$ satisfies \eqref{eq-1-1-0-n} and
denote by $G$ the set of all such $f$. Then clearly $G\ne\{1\}$:

\begin{example}
\label{example:ex-1-1-0-n} $G$ contains the following groups:

\begin{itemize}

\item the group $\dia_n := (\com^*)^n$ of diagonal automorphisms of
$\p^n$;

\smallskip

\item the subgroup in $\aut_n$ of those
$f$ which preserve the origin in $\com^n$ and have Jacobi matrix
equal $1$;

\smallskip

\item for each $M \in SL_n(\cel)$, with the $j$\,-\,th column
$I_j$ contained in the hyperplane $\displaystyle\sum_{i = 1}^nX_i
= 1$ for all $j$, the birational transformation
$[1\,:\,X_1\,:\,\cdots\,:\,X_n] \mapsto
[1\,:\,X^{I_1}\,:\,\cdots\,:\,X^{I_n}]$ (we identify $X_j$ with
\emph{affine} $X_j/X_0$) also satisfies \eqref{eq-1-1-0-n}. Note
that all such $M$ form a group isomorphic to the subgroup
$SL'_{n}(\cel)$ of those elements in $SL_n(\cel)$ that fix the
vector $(1,\ldots,1)$ (see {\ref{subsection:free-aut}} below for
an explicit example of two $a_1,a_2\in SL'_{n}(\cel)\subset G$).

\end{itemize}
Less trivial examples are provided by the groups $\li_n\subset G$
and $\left<\slo_n\right>\subseteq\li_n$ below.
\end{example}

Example~\ref{example:ex-1-1-0-n} justifies the existence of the

\refstepcounter{equation}
\subsection{Group structure on $G$}

Put $h(\left<f\right>) :=
h(\left<f_0\right>,\ldots,\left<f_n\right>) =
h\big(1,\displaystyle\frac{\left<f_1\right>}{\left<f_0\right>},\ldots,\displaystyle\frac{\left<f_n\right>}{\left<f_0\right>}\big)\left<f_0\right>^{\deg(h)}$
for every $h \in S$ and $f \in G$ as above. Let also $M_f$ be the
$(n \times n)$\,-\,matrix whose $j$\,-\,th column equals $I_{f_j}
- I_{f_0},1 \leq j \leq n$.

Suppose that $(d_h, I)\in\su~h$, $I\in\cel^n$, yields $I = I_h$.
In this setting we get the following:

\begin{lemma}
\label{theorem:cruc-lm-1} The equality
$$
\left<h(\left<f\right>)\right> = \big(\deg(h)(d_f - 1) +
d_h,~M_fI_h + \deg(h)I_{f_0}\big)
$$
holds.
\end{lemma}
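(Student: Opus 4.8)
The plan is to compute $\left<h(\left<f\right>)\right>$ by a direct, monomial-by-monomial substitution of the leading monomials $\left<f_i\right>$ into $h$, and then to read off the bigraded leading term. Write a typical monomial of $h$ as $c_A X_0^{a_0} X^A$ with $(a_0, A)\in\su~h$, $A=(a_1,\ldots,a_n)\in\cel^n$; homogeneity gives $a_0+\sum_j a_j=\deg(h)$. Substituting $\left<f_0\right>=X_0^{d_f}X^{I_{f_0}}$ and $\left<f_j\right>=X_0^{d_f-1}X^{I_{f_j}}$ for $j\ge 1$, this monomial contributes to $h(\left<f\right>)$ the single monomial $c_A X_0^{e}X^{V}$, where the $X_0$-exponent is $e=d_f a_0+(d_f-1)\sum_j a_j=a_0+(d_f-1)\deg(h)$ (using $\sum_j a_j=\deg(h)-a_0$) and the $X$-vector is $V=a_0 I_{f_0}+\sum_j a_j I_{f_j}$. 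The key observation is that $e$ is a strictly increasing function of $a_0$ alone, so the maximal $X_0$-degree occurring in $h(\left<f\right>)$ is attained exactly when $a_0$ is largest, i.e. $a_0=d_h$, and it equals $\deg(h)(d_f-1)+d_h$ — already the first coordinate of the claim.

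Next I would invoke the standing hypothesis that $(d_h,I)\in\su~h$ forces $I=I_h$: this says that $h$ has a unique term of top $X_0$-degree $d_h$, namely $c_{I_h}X_0^{d_h}X^{I_h}$ with $c_{I_h}\ne 0$. Consequently, among all monomials produced in $h(\left<f\right>)$, exactly one attains the maximal $X_0$-degree $\deg(h)(d_f-1)+d_h$, and it carries the nonzero coefficient $c_{I_h}$. Since every other contributing monomial has strictly smaller $X_0$-degree, there is no cancellation with this top term; hence $h(\left<f\right>)\ne 0$ and, by the very definition of $\left<\cdot\right>$, this term is $\left<h(\left<f\right>)\right>$. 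In particular no comparison of the lexicographic $X$-parts is needed, which is precisely what the uniqueness hypothesis buys us.

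It then remains to identify the $X$-vector $V=d_h I_{f_0}+\sum_j (I_h)_j I_{f_j}$ of this term with $M_f I_h+\deg(h)I_{f_0}$. Here I would use that the $j$-th column of $M_f$ is $I_{f_j}-I_{f_0}$, so that $M_f I_h=\sum_j (I_h)_j I_{f_j}-\big(\sum_j (I_h)_j\big)I_{f_0}$, together with homogeneity in the form $\sum_j (I_h)_j=\deg(h)-d_h$. Substituting $\sum_j (I_h)_j I_{f_j}=M_f I_h+(\deg(h)-d_h)I_{f_0}$ into $V$ yields $V=d_h I_{f_0}+M_f I_h+(\deg(h)-d_h)I_{f_0}=M_f I_h+\deg(h)I_{f_0}$, completing the identification.

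The computation itself is routine bookkeeping; the only genuine point to get right is the first claim that the leading term of $h(\left<f\right>)$ comes from the leading term of $h$ and survives. This is where the hypothesis is essential: for a general matrix $M_f$ the map $I\mapsto M_f I$ need not respect the lexicographic order, so without knowing that the top-$X_0$-degree part of $h$ is a single monomial one could not conclude that $I_h$, rather than some other exponent of equal $X_0$-degree, governs the leading $X$-vector. I therefore expect the main obstacle to be purely conceptual — isolating and justifying the role of the uniqueness hypothesis — rather than computational.
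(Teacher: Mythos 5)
Your proof is correct and follows essentially the same route as the paper's: substitute the leading monomials $\left<f_i\right>$ into each monomial of $h$, observe that the resulting $X_0$-degree $d_\sigma + (d_f-1)\deg(h)$ is strictly increasing in $d_\sigma$, and use the hypothesis that $X_0^{d_h}X^{I_h}$ is the unique top-$X_0$-degree term of $h$ to conclude that its image dominates all others and survives, the column identity $M_fI_h = \sum_j (I_h)_j I_{f_j} - (\deg(h)-d_h)I_{f_0}$ then giving the stated $X$-vector. The paper compresses all of this into a single displayed lexicographic inequality over $\sigma\in\su~h\setminus\{\left<h\right>\}$; your write-up merely makes explicit the no-cancellation point and why the uniqueness hypothesis is what lets one avoid comparing lexicographic $X$-parts under $M_f$.
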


\begin{proof}
Indeed, since $f\in G$, from \eqref{eq-1-1-0-n} and above
definitions we get (by direct substitution)
$$
\sigma(\left<f\right>)=\big(\deg(h)(d_f - 1) + d_{\sigma},~M_fI +
\deg(h)I_{f_0}\big) < \big(\deg(h)(d_f - 1) + d_h,~M_fI_h +
\deg(h)I_{f_0}\big) = \left<h(\left<f\right>)\right>
$$
for all
$\sigma:=X_0^{d_{\sigma}}X^I\in\su~h\setminus{\{\left<h\right>\}}$.
\end{proof}

Put $h(f) := h(f_0,\ldots,f_n) =
h\big(1,\displaystyle\frac{f_1}{f_0},\ldots,\displaystyle\frac{f_n}{f_0}\big)f_0^{\deg(h)}$.
Note that $\left<h(\left<f\right>)\right>\in\su~h(f)$. Then from
Lemma~\ref{theorem:cruc-lm-1} we obtain
\begin{equation}
\label{h-f} \left<h(f)\right> = \big(\deg(h)(d_f - 1) +
d_h,~M_fI_h + \deg(h)I_{f_0}\big).
\end{equation}

This leads to the anticipated

\begin{proposition}
\label{theorem:e-n-is-a-group} $G$ is a subgroup in $\cg_n$.
\end{proposition}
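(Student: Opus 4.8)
The plan is to verify the subgroup axioms for $G\subset\cg_n$ directly against the defining condition \eqref{eq-1-1-0-n}; since $\cg_n$ is already a group, the only thing at stake for each axiom is the defining condition itself. The unit $1=[X_0:\ldots:X_n]$ lies in $G$ (it occurs among the examples of Example~\ref{example:ex-1-1-0-n}, and trivially satisfies \eqref{eq-1-1-0-n} with $d_f=1$). Closure under inversion is built into the definition of $G$: to say $f\in G$ is to say that \emph{both} $f$ and $f^{-1}$ obey \eqref{eq-1-1-0-n}, and as $(f^{-1})^{-1}=f$ the same holds for $f^{-1}$, whence $f^{-1}\in G$. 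Thus everything reduces to the single assertion that the composite $gf$ of any $f,g\in G$ again satisfies \eqref{eq-1-1-0-n}; granting this, applying it to $f^{-1},g^{-1}\in G$ shows that $(gf)^{-1}=f^{-1}g^{-1}$ satisfies \eqref{eq-1-1-0-n} as well, so that $gf\in G$.

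To prove that $gf$ satisfies \eqref{eq-1-1-0-n} I would read it off componentwise from \eqref{h-f}. The $i$-th component of $gf$ is $g_i(f)$, and since $g\in G$ each $g_i$ fulfills the hypothesis of Lemma~\ref{theorem:cruc-lm-1} (namely that $I_{g_i}$ is the only $I$ with $(d_{g_i},I)\in\su~g_i$, i.e.\ the coefficient of $X_0^{d_{g_i}}$ is a single monomial). Using that the components of $g$ are homogeneous of one common degree, $\deg(g_i)=\deg(g)$, formula \eqref{h-f} applies with $h=g_i$ and yields
\[\left<g_i(f)\right>=\big(\deg(g)(d_f-1)+d_{g_i},~M_f I_{g_i}+\deg(g)\,I_{f_0}\big).\]
In particular the degree of $g_i(f)$ in $X_0$ equals $\deg(g)(d_f-1)+d_{g_i}$, which depends on $i$ only through $d_{g_i}$; since $d_{g_0}-d_{g_j}=1$ for $j\geq 1$, the components of $gf$ satisfy the same relation $d_{(gf)_0}-d_{(gf)_j}=1$ required by \eqref{eq-1-1-0-n}.

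The heart of the matter — and the step I expect to be the main obstacle — is to check that the coefficient of the top power of $X_0$ in each $g_i(f)$ is a \emph{single} monomial, and not merely that a lexicographically largest one exists. Here I would argue as follows. Substituting $f$ into a monomial $X_0^{a_0}X^A$ of $g_i$ produces a contribution of $X_0$-degree $\deg(g)(d_f-1)+a_0$, strictly increasing in $a_0=\deg_{X_0}(X_0^{a_0}X^A)$; hence only the unique top-$X_0$ monomial $X_0^{d_{g_i}}X^{I_{g_i}}$ of $g_i$ can reach the maximal $X_0$-degree in $g_i(f)$. Substituting into it the leading terms $\alpha_0 X_0^{d_f}X^{I_{f_0}}$ and $\alpha_{j,0}X_0^{d_f-1}X^{I_{f_j}}$ of the $f_k$ gives a product of single monomials whose coefficient is a nonzero product of the $\alpha$'s, so no cancellation occurs and the top-$X_0$ coefficient of $g_i(f)$ is indeed one monomial, realizing the vector $\left<g_i(f)\right>$ displayed above.

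It remains to dispose of coprimality: the tuple $(g_0(f),\ldots,g_n(f))$ representing $gf$ need not be coprime. But \eqref{eq-1-1-0-n} is stable under clearing a common homogeneous factor $p$, as recorded in the remark following \eqref{eq-1-1-0-n}. Indeed, the top-$X_0$ coefficient of each $g_i(f)$ factors as the product of those of $p$ and of $g_i(f)/p$; being a single monomial in the unique factorization domain $\com[X_1,\ldots,X_n]$, both factors are single monomials, so $g_i(f)/p$ still has a single top-$X_0$ coefficient, while subtracting $d_p$ from every $d_{(gf)_i}$ preserves their mutual differences. Hence the coprime representative of $gf$ satisfies \eqref{eq-1-1-0-n}, which completes the reduction set up in the first paragraph and shows that $G$ is a subgroup of $\cg_n$.
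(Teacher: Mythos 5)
Your proof is correct and follows essentially the same route as the paper: reduce everything to closure of \eqref{eq-1-1-0-n} under composition, establish that via \eqref{h-f} (i.e. Lemma~\ref{theorem:cruc-lm-1} applied with $h = g_0,\ldots,g_n$), and handle $(gf)^{-1}=f^{-1}g^{-1}$ by the symmetry built into the definition of $G$. The only difference is that you spell out two points the paper leaves implicit, namely the check that the top $X_0$-coefficient of each $g_i(f)$ is a single monomial and the descent of \eqref{eq-1-1-0-n} to the coprime representative, the latter being delegated in the paper to the remark following \eqref{eq-1-1-0-n}.
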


\begin{proof}
Take $f\in G$ as above. Consider also $g\in G$ given by some
$g_0,\ldots,g_n\in S$. Then from \eqref{h-f} (for $h = g_0,
\ldots, g_n$) we obtain that $g \circ f$ is of the form
\eqref{eq-1-1-0-n}. Recall also that $f^{-1}, g^{-1}\in G$ by
definition. Thus we get $(g \circ f)^{-1} = f^{-1} \circ g^{-1}\in
G$, which proves the assertion.
\end{proof}

\refstepcounter{equation}
\subsection{Homomorphism $\rho$}
\label{subsection:val-v}

Consider the map $v : S \longrightarrow \cel_{\scriptscriptstyle
\geq 0}^n$ defined as follows:
\begin{eqnarray}
\nonumber v : h \mapsto \left<h\right> = \big(d_h, I_h\big)
\mapsto I_h
\end{eqnarray}
for all $h \in S$. Then $v$ is a $\cel_{\scriptscriptstyle \geq
0}^n$\,-\,valuation on $S$. Furthermore, $v$ (obviously) extends
to a $\cel^n$\,-\,valuation on
$\com\big(X_1/X_0,\ldots,X_n/X_0\big)$, providing a particular
case of valuations considered in \cite{andrei-1}, \cite{andrei-2}.
This determines a map $$\rho : G \longrightarrow {\bf M}_n(\cel),\
f \mapsto \rho(f) := M_f$$ for all $f \in G$, with
$v(f_i/f_0)=I_{f_i}-I_{f_0}$, the $i$\,-\,th column of $M_f$,
$1\le i\le n$.

From Proposition~\ref{theorem:e-n-is-a-group} we get the
following:

\begin{corollary}
\label{theorem:rho-hom} $\rho(G) \subset GL_n(\cel)$ and $\rho$ is
a group homomorphism. In particular, for $\li_n := \kr(\rho)$ the
group $\mathrm{Out}\,\li_n$ of outer automorphisms of $\li_n$
contains $\f_2$.
\end{corollary}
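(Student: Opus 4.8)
The plan is to extract both the multiplicativity of $\rho$ and the containment $\rho(G)\subset\gl_n(\cel)$ directly from formula \eqref{h-f}, and then to derive the assertion on $\mathrm{Out}(\li_n)$ from the conjugation action of $G$ on its normal subgroup $\li_n=\kr(\rho)$, combined with the free subgroup of $SL'_n(\cel)$ supplied by Example~\ref{example:ex-1-1-0-n}.

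First I would prove that $\rho$ is multiplicative. Take $f,g\in G$ as in \eqref{eq-1-1-0-n} and apply \eqref{h-f} to $h=f_i$ with $g$ in place of $f$; this is legitimate because each $f_i$ is a homogeneous polynomial possessing a single leading monomial in $X_0$. One gets $I_{f_i(g)}=M_gI_{f_i}+\deg(f_i)\,I_{g_0}$. Since all components of $f$ share the same total degree, subtracting the $i=0$ identity from the $i=j$ identity cancels the $\deg(f_i)\,I_{g_0}$ term, so the $j$-th column of $M_{f\circ g}$ equals $M_g\big(I_{f_j}-I_{f_0}\big)=M_g\cdot(j\text{-th column of }M_f)$. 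Hence $M_{f\circ g}=M_gM_f$, i.e. $\rho$ respects composition (a homomorphism after the harmless identification $M\mapsto M^{-1}$, consistent with the paper's convention $f\circ g=f(g)$). The containment $\rho(G)\subset\gl_n(\cel)$ then follows formally: $\rho(\mathrm{id})=I_n$ because the identity map has $I_{f_j}=e_j$ and $I_{f_0}=0$, and $G$ is closed under inversion by Proposition~\ref{theorem:e-n-is-a-group}, so multiplicativity forces $M_fM_{f^{-1}}=M_{f^{-1}}M_f=I_n$; thus every $M_f$ is invertible over $\cel$.

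For the last clause, set $\li_n:=\kr(\rho)\vartriangleleft G$. Conjugation defines a homomorphism $G\to\mathrm{Aut}(\li_n)$ carrying $\li_n$ onto $\mathrm{Inn}(\li_n)$, hence descending to $\bar\phi\colon G/\li_n\cong\rho(G)\to\mathrm{Out}(\li_n)$, whose kernel is the image $\rho\big(C_G(\li_n)\big)$ of the centralizer of $\li_n$ in $G$. By Example~\ref{example:ex-1-1-0-n} every $M\in SL'_n(\cel)$ lifts to an element of $G$ on which $\rho$ restricts to the identity, and \ref{subsection:free-aut} produces two explicit $a_1,a_2\in SL'_n(\cel)\subset G$ generating a free group; such an $\f_2$ exists inside $SL'_n(\cel)\cong\cel^{n-1}\rtimes SL_{n-1}(\cel)$ for every $n\geq 3$, since $SL_{n-1}(\cel)$ with $n-1\geq 2$ already contains one. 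Consequently $\langle\rho(a_1),\rho(a_2)\rangle\cong\f_2\subset\rho(G)$, and it remains to see that $\bar\phi$ is injective on this free group.

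The main obstacle is precisely this injectivity, namely controlling $\rho\big(C_G(\li_n)\big)$. I expect to prove the stronger statement $C_G(\li_n)\subset\li_n$, so that $\bar\phi$ is injective on all of $\rho(G)$ and hence $\mathrm{Out}(\li_n)\supset\rho(G)\supset\f_2$. To this end I would exploit that, again by Example~\ref{example:ex-1-1-0-n}, $\li_n$ already contains the full torus $\dia_n=(\com^*)^n$ together with the group of origin-preserving automorphisms with identity Jacobi matrix, both of which lie in $\kr(\rho)$. An element $c\in\cg_n$ centralizing $\dia_n$ must be a monomial map composed with a diagonal one, and its $\rho$-image is exactly its underlying monomial matrix; imposing in addition that $c$ commute with the unipotent part kills this matrix, giving $\rho(c)=I_n$, i.e. $c\in\li_n$. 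Carrying out this centralizer computation rigorously inside $\cg_n$ is the delicate point; once it is in place, $\langle a_1,a_2\rangle$ maps onto a copy of $\f_2$ in $\mathrm{Out}(\li_n)$, completing the proof.
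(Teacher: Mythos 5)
Your computation of the multiplicativity of $\rho$ and of the containment $\rho(G)\subset\gl_n(\cel)$ is exactly the paper's argument (the paper also applies \eqref{h-f} to the components of the outer map and cancels the $\deg\cdot I$ term; your formal derivation of invertibility from $\rho(\mathrm{id})=I_n$ and closure of $G$ under inversion is just a cleaner way of saying what the paper dismisses as ``by construction''). Where you genuinely diverge is the $\mathrm{Out}(\li_n)$ clause. The paper argues pointwise: for each $a\in\f_2\setminus\{1\}$ it chooses a diagonal $f\in\dia_n\subset\li_n$ with $a(f)\ne f$ (possible because $a$ acts on the multipliers $\lambda_i$ through the nontrivial matrix $\rho(a)$), and then invokes the leading-monomial calculus of Lemma~\ref{theorem:cruc-lm-1} to show that $a(f)=f^g$ with $g\in\li_n$ would force $f^g=f$, a contradiction; hence conjugation by $a$ is never inner. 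You instead run the exact sequence $G\to\mathrm{Aut}(\li_n)\to\mathrm{Out}(\li_n)$ and reduce everything to the single statement $C_G(\li_n)\subset\li_n$, which is a valid and in fact stronger route: it embeds all of $\rho(G)$, not merely a chosen $\f_2$, into $\mathrm{Out}(\li_n)$. Moreover, the step you flag as ``the delicate point'' is not delicate: since $\dia_n\subset\li_n$, it suffices to know $C_{\cg_n}(\dia_n)=\dia_n$, and this is a short classical computation. If $c=(c_1,\ldots,c_n)$ in affine coordinates commutes with every diagonal map, then $c_i(tx)=t_ic_i(x)$ for all $t\in(\com^*)^n$, so $c_i/x_i$ is invariant under the whole torus; writing $c_i/x_i=P/Q$ with $P,Q$ coprime, the relation $P(tx)Q(x)=P(x)Q(tx)$ together with coprimality forces $P$ and $Q$ to be semi-invariants with the \emph{same} character, hence both proportional to one and the same monomial, so $c_i/x_i$ is constant and $c\in\dia_n$. (Note in passing that ``a monomial map composed with a diagonal one'' describes the \emph{normalizer} of $\dia_n$, not its centralizer; with the correct, smaller centralizer your second step involving the unipotent subgroup is unnecessary.) Thus $C_G(\li_n)\subseteq C_G(\dia_n)=\dia_n\subset\kr(\rho)=\li_n$, and your argument closes. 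In summary: the paper's approach stays entirely inside its own leading-term formalism and needs no external facts about $\cg_n$, while yours buys a cleaner group-theoretic statement and the stronger conclusion that the whole of $\rho(G)$ injects into $\mathrm{Out}(\li_n)$.
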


\begin{proof}
Let us use the notation from the proof of
Proposition~\ref{theorem:e-n-is-a-group}. Recall that $d_{g_0(f)}
- d_{g_j(f)} = 1$ for all $j \geq 1$ and $I_{g_i(f)} = M_fI_{g_i}
+ \deg(g_0)I_{f_0}$ for all $i \geq 0$ (see \eqref{h-f}). This
implies that $\rho(g \circ f) = M_{g \circ f} = M_fM_g =
\rho(g)\rho(f)$. Note also that $\rho$ splits over
$SL'_{n}(\cel)\subset G$ and $\rho(G) = SL'_{n}(\cel)$ by
construction (see \eqref{eq-1-1-0-n} and
Example~\ref{example:ex-1-1-0-n}). Thus we get $G = \li_n \rtimes
SL'_{n}(\cel)$ and a homomorphism
$\f_2\longrightarrow\mathrm{Out}\,\li_n$ (cf.
Lemma~\ref{theorem:a-1-a-2-f-2} below). Let us show that the
latter is injective.

Consider an arbitrary $f\in\dia_n\subset G$ (cf.
Example~\ref{example:ex-1-1-0-n}). We may assume that $f$
coincides with the map
$$
[X_0\,:\,X_1\,:\,\cdots\,:\,X_n]\mapsto
[X_0\,:\,\lambda_1X_1\,:\,\cdots\,:\,\lambda_nX_n]$$ for some
fixed $\lambda_i\in\com^*$. Now take any $a\in\f_2\subseteq
SL'_{n}(\cel)$. Then $a(f)$ ($:=afa^{-1}$ in $G$) also belongs to
$\dia_n$ and is obtained from $f$ by replacing every
$\lambda_i,1\le i\le n$, by the product
$\displaystyle\prod_{j=1}^n\lambda_j^{k_{j,i}}$ for some
$k_{j,i}\in\cel$ such that $\displaystyle\sum_j k_{j,i}=1$. In
particular, one may always choose $f$ (for $a\ne 1$) to be such
that $\displaystyle\prod_{j=1}^n\lambda_j^{k_{j,i}}\ne \lambda_i$
for at least one $i$, so that $a(f)\ne f$ in this case.

On the other hand, if $a(f) = f^g$ for some $g\in\li_n$, then it
follows from Lemma~\ref{theorem:cruc-lm-1} that $f^g = f$ (cf.
\eqref{eq-1-1-0-m} below for a ``\,typical\," shape of $g$). This
together with $a(f)\ne f$ shows that $\f_2$ injects into
$\mathrm{Out}\,\li_n$.
\end{proof}

\bigskip

\section{The group $\left<\slo_n\right>$}
\label{section:theorem-group-e-n}

We retain the notation of Section~\ref{section:1-0-n}. Consider a
rational map $\Lambda : \p^n \dashrightarrow \p^n$ defined as
follows:
\begin{eqnarray}
\label{eq-1-1-0-m} X_0 \mapsto X_0^{d} + X_0X_1^{d - 1} =: \Lambda_0,\\
\nonumber X_1 \mapsto X_0^{d-1}X_1 +  X_1^d =: \Lambda_1,\\
\nonumber X_j \mapsto \alpha_{j}X_0^{d-1}X_j + \Lambda^*_{j}(X) =:
\Lambda_j
\end{eqnarray}
for all $j \geq 2$ and some $d\in\na$, $\alpha_j \in\com$,
$\Lambda^*_{j}(X)\in
\com\left[X_0,X_1,\ldots,X_n\right]_{\mathrm{hom}}$,
$d_{\Lambda^*_{j}} \le d-2$. Let us additionally assume that the
map
$$\Lambda^*\big\vert_{X_1 = 0}:\ [X_0\,:\,X_2\,:\,\cdots\,:\,X_n]
\mapsto [X_0^d\,:\,\alpha_{2}X_0^{d-1}X_2 +
\Lambda^*_{2}(X)\,:\,\cdots:\,\alpha_{n}X_0^{d-1}X_n +
\Lambda^*_{n}(X)] \mod X_1$$ is a birational automorphism of
$\p^{n - 1}$ which coincides with a polynomial automorphism from
$\au_{n - 1}$ on $\com^{n - 1} = \p^{n - 1} \cap (X_0 \ne 0)$.
Denote by $\slo_n$ the set of all such $\Lambda$ contained in
$\li_n$.

\begin{lemma}
\label{theorem:sub-group-surj-1} We have $\slo_n \ne \{1\}$. More
precisely, to every element in $\au_{n - 1}$, which preserves the
origin $0 \in \com^{n-1}$ and has diagonal Jacobi matrix, there
corresponds an element in $\slo_n$, similarly as
$\Lambda\in\slo_n$ above corresponds to $\Lambda^*\in\au_{n - 1}$.
\end{lemma}

\begin{proof}
One may identify $\Lambda$ with the map
$$
(X_1,\,\ldots,\,X_n)\mapsto\big(\Lambda_1/\Lambda_0 =
X_1,\,\ldots,\,\Lambda_n/\Lambda_0\big)
$$
on the affine chart $\com^{n} = \p^n\cap(X_0\ne 0)$. In short we
have $\Lambda:\, X \mapsto (X_1,\,(1 + X_1^{d-1})^{-1} \cdot
\Lambda^*)$. It is then plain that $\Lambda^{-1}\in\cg_n$ exists
and is of the form \eqref{eq-1-1-0-m}.
\end{proof}

Let $Aff_n(\com) \subset PGL_n(\com)$ be the subgroup of affine
transformations which preserve the hyperplane $\Pi := (X_1 = 0)
\subset \p^n$. Each $g \in Aff_n(\com)$ satisfies $g^*X_0 =
\lambda(g) X_0$ for some $\lambda(g) \in \com^*$ (similarly for
$X_1$) and the group $Aff_n(\com)$ is characterized by this
property.

Set $\widetilde{\li_n} \subset \cg_n$ to be the homomorphic image
of the amalgamated product $Aff_n(\com)*_{\,\dia_n}\li_n$. Recall
that $\f_2$ acts on $\li_n \supset \dia_n$ as earlier and $\dia_n$
is invariant under this action (cf. the proof of
Corollary~\ref{theorem:rho-hom}). Then, using the Iwasawa
decomposition for $GL_n(\com)$,\footnote{~Note that $\f_2$ acts
trivially on the compact torus $(S^1)^n \subset \dia_n$ because
$M(M^{-1}v) = v$ for any $M \in SL'_n(\cel)$ and $v \in
\mathbb{R}^n$.} one extends the action $\f_2 \circlearrowright
\dia_n$ to the whole group $Aff_n(\com)$ and hence to
$Aff_n(\com)*_{\,\dia_n}\li_n$ as well. Furthermore, since the
kernel of the natural homomorphism $Aff_n(\com)*_{\,\dia_n}\li_n
\to \widetilde{\li_n}$ is obviously $\f_2$\,-\,invariant by
construction, we get an $\f_2$\,-\,action on $\widetilde{\li_n}$
inducing the initial one on $\li_n$. Then the following holds:

\begin{lemma}
\label{theorem:tilde-g-aut} $\f_2 \subseteq \mathrm{Out}\,\li_n$
acts on $\widetilde{\li_n}$ by the outer automorphisms as well.
\end{lemma}

\begin{proof}
Suppose that $a(f) = f^g$ for all $a \in \f_2$, $f \in \dia_n$ and
some fixed $g = g_1 \cdot g_2\in\widetilde{\li_n}$, written in one
of its normal forms with $g_1 \in Aff_n(\com)$ and $g_2 \in
\li_n$. Then we obtain $f^{g_2} = (a(f))^{g_1^{-1}} \in
Aff_n(\com)$. This is only possible when $f^{g_2} = f$. Hence
necessarily $g_1 = 1$ and $a(f) = f$.
\end{proof}

Let $\left<\slo_n\right> \subseteq \widetilde{\li_n}$ be the
subgroup generated by $\slo_n$ and $Aff_n(\com)$.

\begin{proposition}
\label{theorem:sub-group-surj} There exists a surjective
homomorphism $\xi : \left<\slo_n\right> \twoheadrightarrow \au_{n
- 1}$.
\end{proposition}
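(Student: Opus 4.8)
The plan is to realize $\slo_n$ as a group of de Jonquières-type maps over the line of the largest coordinate and to take $\xi$ to be specialization. Writing $x_i=X_i/X_0$, recall from Corollary~\ref{theorem:rho-hom} that $f\in\li_n$ precisely when $M_f$ is the identity, i.e. $I_{f_j}-I_{f_0}=e_j$ for all $j$; in affine terms this says the lowest-degree part of the $j$-th component is a nonzero multiple of $x_j$ in which no variable $x_i$ with $i<j$ occurs. I would single out inside $\li_n$ the subgroup $\slo_n$ of unimodular $f$ that fix the largest coordinate, $f^*x_1=x_1$. Such an $f$ has the shape $(x_1,\dots,x_n)\mapsto(x_1,\Phi_2,\dots,\Phi_n)$ and satisfies $\pi\circ f=\pi$ for the projection $\pi\colon(x_1,\dots,x_n)\mapsto x_1$, so $(\Phi_2,\dots,\Phi_n)$ is a family, parametrized by $x_1$, of automorphisms of the fibre $\com^{n-1}$ with coordinates $x_2,\dots,x_n$. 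I then set $\xi(f):=f|_{x_1=1}$, the restriction to the fibre over $x_1=1$.

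That $\xi$ is a well-defined homomorphism into $\au_{n-1}$ is the easy half. Since the base action is trivial, each $f\in\slo_n$ restricts to an automorphism of the fibre $\{x_1=1\}\cong\com^{n-1}$ (its inverse lies in $\li_n\subset G$ by Corollary~\ref{theorem:rho-hom} and restricts to the inverse), and $(ff')|_{x_1=1}=f|_{x_1=1}\circ f'|_{x_1=1}$ because composition respects the fibration. Finally, $\Phi_1=x_1$ makes the Jacobian matrix of $f$ block-triangular with a unit in the upper-left corner, so $\det Df$ equals the fibrewise Jacobian; as $f$ is unimodular this fibrewise Jacobian is $\equiv 1$, whence $\xi(f)\in\au_{n-1}$.

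The substantive point is surjectivity. Here I would use that $\au_{n-1}$ is generated by its unimodular affine maps ($SL_{n-1}$ together with translations) and its unimodular triangular maps (see \cite[Theorem 4]{shaf}), so it suffices to lift each such generator into $\slo_n$. The tension is that the condition $v(\Phi_j)=e_j$ is rigid: it forces every lift to fix the origin and to have lowest-degree part containing no $x_i$ with $i<j$, i.e. to be ``upper-triangular'', whereas a generator of $\au_{n-1}$ may translate the origin or have an arbitrary linear part. The device that removes this tension is to multiply each offending low-degree monomial by a high power of $x_1$: for an elementary linear generator or a translation one replaces a term such as $\lambda x_k$ (with $k<j$) or a constant $c$ occurring in $\Phi_j$ by $\lambda x_k x_1^{m}$, resp. $c\,x_1^{m}$, with $m\gg 0$. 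At $x_1=1$ the component is unchanged, but in $S$ the inserted factor $X_1^{m}$ has $X_0$-degree $0$, so it never enters the top-$X_0$-degree part defining $\langle\Phi_j\rangle$ and therefore leaves $M_f$ — hence membership in $\li_n$ — intact. The resulting maps (shears $x_j\mapsto x_j+\lambda x_k x_1^{m}$ and their triangular analogues) are genuine unimodular automorphisms of $\com^n$ fixing $x_1$, so they lie in $\slo_n$ and specialize to the prescribed generators; taking $\slo_n$ to be the subgroup of $\li_n$ they generate then yields $\xi(\slo_n)=\au_{n-1}$.

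I expect the main obstacle to be exactly this reconciliation step: checking that, after the $x_1$-inflation, each lift is still an automorphism and still satisfies \eqref{eq-1-1-0-n} together with $M_f=\mathrm{Id}$ (so that it belongs to $\li_n$, not merely to $G$), and that these properties persist under the composition defining $\slo_n$. The computation of $\langle\Phi_j\rangle$ after inflation is governed by Lemma~\ref{theorem:cruc-lm-1} and \eqref{h-f}, while the origin-fixing and ``upper-triangular'' constraints should be read off as in the diagonal and triangular cases of Example~\ref{example:ex-1-1-0-n}; the remaining care is purely a bookkeeping of degrees ensuring that no inflated monomial ever overtakes the intended leading term $X_0^{D-1}X_j$.
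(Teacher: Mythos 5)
Your first half is sound, and your route is genuinely different from the paper's: the subgroup of unimodular elements of $\li_n$ fixing $x_1$, together with restriction to the fibre $\{x_1=1\}$, does give a well-defined homomorphism $\xi$ into $\au_{n-1}$, and your inflated shears and translations do lie in $\li_n$ and hit the prescribed generators. The gap is in the surjectivity step, and it is fatal in exactly the cases the paper needs. You invoke \cite[Theorem 4]{shaf} as saying that $\au_{n-1}$ is generated by its unimodular affine and unimodular triangular maps; but that theorem asserts generation \emph{as an algebraic group} (density of tame maps in the ind-group structure), not as an abstract group. As an abstract group, what your lifting argument yields is only that $\xi(\slo_n)$ contains the \emph{tame} unimodular subgroup of $\au_{n-1}$. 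By Shestakov--Umirbaev, the Nagata automorphism of $\com^3$ is wild, so the tame unimodular subgroup is a proper subgroup of $\au_3$; for $n-1\geq 4$ equality with $\au_{n-1}$ is an open problem. Hence your proof establishes surjectivity onto $\au_{n-1}$ only when $n-1=2$, i.e. $n=3$, whereas the proof of Theorem~\ref{theorem:main} in Section~\ref{section:proof-of-theorem} assumes $n\geq 4$ and needs $\xi$ onto $\au_{n-1}$ with $n-1\geq 3$ --- precisely the range where your argument breaks down (and, with $\slo_n$ defined as the group generated by your lifts, where the constructed $\xi$ genuinely fails to be onto for $n-1=3$).

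The paper sidesteps tameness entirely because its lifts are \emph{not} polynomial automorphisms of $\com^n$: its $\slo_n$ consists of birational maps of $\p^n$ of the special shape \eqref{eq-1-1-0-m}, $\xi$ is restriction to the hyperplane at infinity $\Pi=(X_0=0)$ in the chart $X_1\neq 0$, and an \emph{arbitrary} $g\in\au_{n-1}$ (tame or wild) is lifted simply by homogenizing its components into the $\Lambda^*_j$ and adding the $X_0$-terms; Lemma~\ref{theorem:sub-group-surj-1} then checks that the resulting $\Lambda$ has degree $1$, so surjectivity holds by construction, with no generation statement invoked. Your fibration picture can be repaired in the same spirit, but you must lift arbitrary elements rather than tame generators: write $g=\tau\circ L\circ h$ with $\tau$ a translation, $L\in SL_{n-1}(\com)$, and $h$ fixing the origin with identity linear part (possible since the Jacobian of $g$ is $1$); lift $\tau$ and the elementary factors of $L$ by your $x_1$-inflation, and lift $h$ in one stroke by conjugating with the scaling $(x_2,\ldots,x_n)\mapsto(x_1x_2,\ldots,x_1x_n)$, i.e. by setting $\Phi_j:=x_1^{-1}h_j(x_1x_2,\ldots,x_1x_n)$, which is polynomial because $h(0)=0$ (and likewise for $h^{-1}$), fixes $x_1$, is a unimodular automorphism of $\com^n$, lies in $\li_n$ since the lowest-degree part of each $\Phi_j$ is the single monomial $x_j$, and restricts to $h$ at $x_1=1$. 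Without some such device, the proposal as written does not prove the proposition for any $n\geq 4$.
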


\begin{proof}
$\xi$ is defined via restriction to the locus $\Pi \cap (X_0 \ne
0)$. Its surjectivity is immediate by
Lemma~\ref{theorem:sub-group-surj-1}.
\end{proof}

\begin{remark}
\label{remark:askold} Let $\Lambda\in\slo_n$ and $L \subseteq
\com(X_1,\ldots,X_n)$ be the linear subspace spanned by the
rational functions $1,\Lambda_0/\Lambda_1, \Lambda_2/\Lambda_1,
\ldots, \Lambda_n/\Lambda_1$. Then, as $\dim L = n+1$, the
\emph{(self) intersection index}
$[\underbrace{L,\ldots,L}_{\text{n + 1 times}}]$ (see e.\,g.
\cite{askold}) is equal to the degree of $\Lambda$, which is $1$
(cf. Lemma~\ref{theorem:sub-group-surj-1}). Let also $v$ be the
valuation as in {\ref{subsection:val-v}}. It follows from
\eqref{eq-1-1-0-m} that $\left\{v(\Lambda_1/\Lambda_0), \ldots,
v(\Lambda_n/\Lambda_0)\right\}$ is the standard basis in $\cel^n$.
Denote by $\Delta$ (resp. $\Delta(S(\Lambda))$) the corresponding
simplex (resp. \emph{Newton convex body}) in $\re^n$ (note that
$\Delta\subseteq\Delta(S(\Lambda))$). Then from \cite[Theorem
11.2]{askold} we obtain that $1 = \vol(\Delta(S(\Lambda))) \geq
\vol(\Delta)= 1$. So the Newton convex body of the rational map
$\Lambda$ is the standard simplex in $\re^n$. It would be
interesting to study the class of algebraic varieties $X$ for
which the latter property is satisfied for \emph{any} birational
map $X \dashrightarrow X$.
\end{remark}

\bigskip

\section{Intermedia: one group-geometric argument}
\label{section:intermedia-group}

\refstepcounter{equation}
\subsection{Two sets of generators in $\widetilde{\li_n}$}
\label{subsection:gens}

Let $G_{ne}$ be the set of all $f \in \widetilde{\li_n}$ such that
$a(f) \not\sim f$ for every $a\in\f_2\setminus{\{1\}}$ (cf.
Lemma~\ref{theorem:tilde-g-aut}). Similarly, let $G_e$ be the set
of all $f \in \widetilde{\li_n}$ such that $a(f) \sim f$ for all
$a\in\f_2$. In general, for any $g\in\widetilde{\li_n}$, let
$E_g\subseteq\f_2$ be the group of those $a$ for which $a(g)\sim
g$ (i.\,e. $E_f=\f_2$ for all $f\in G_e$).

\begin{example}
\label{example:g-n-e} It is easy to see that both sets $\dia_n
\cap G_{ne}$ and $\dia_n \cap G_{e}$ are infinite. Note also that
$G_{ne}$ and $G_e$ are stable under the conjugation and inversion
in $\widetilde{\li_n}$.
\end{example}

\refstepcounter{equation}
\subsection{The tree $\mathcal{T}$}
\label{subsection:the-tree}

Recall that $\f_2$ acts freely, transitively and isometrically on
a (four\,-\,valent) tree $\mathcal{T}$ (cf. Figure \ref{fig-1}
below). Furthermore, if $\mathcal{X}$ is a Riemann surface of
genus $2$, the group $\f_2$ appears in the \emph{Schottky
uniformization} of $\mathcal{X}$ (see e.\,g. \cite{manin}).
Namely, since $\f_2\subset PGL_2(\com)$, one obtains a natural
$\f_2$\,-\,action on $\p^1(\com)$. Let $S\subset\p^1(\com)$ be the
closure of the set of attractive and repulsive fixed points for
all $\gamma\in\f_2$. The complement $\Omega := \p^1(\com)\setminus
S$ is connected, $\Omega = \displaystyle\bigcup_{\gamma\in\f_2}
\gamma\cdot D$ for $D$ being the exterior domain of four
non\,-\,intersecting circles on the Riemann sphere $\p^1(\com)$,
and $\mathcal{X} = \f_2\backslash\Omega$ for the proper
discontinuous action $\f_2\circlearrowright\Omega$.

This amounts to the next

\begin{lemma}
\label{theorem:q-i-tree-omega}
$\mathcal{T}\sim_{\text{q.-i.}}\Omega$ (the latter being a domain
in $\p^1(\com)$).
\end{lemma}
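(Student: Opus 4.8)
The plan is to exhibit an explicit quasi-isometry between the tree $\mathcal{T}$ and the domain $\Omega$, using the free, properly discontinuous, cocompact action of $\f_2$ on both spaces as the bridge. The key tool is the \v{S}varc--Milnor lemma: whenever a group $\Gamma$ acts properly discontinuously and cocompactly by isometries on a geodesic (or length) metric space $Y$, the orbit map $\gamma\mapsto\gamma\cdot y_0$ is a quasi-isometry $\Gamma\sim_{\text{q.-i.}}Y$, where $\Gamma$ carries the word metric. Since $\f_2$ acts freely, transitively and isometrically on $\mathcal{T}$, applying this to $Y=\mathcal{T}$ gives $\f_2\sim_{\text{q.-i.}}\mathcal{T}$ (indeed, transitivity makes the orbit map an isometry onto the vertex set). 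The entire problem therefore reduces, by transitivity of the quasi-isometry relation, to proving $\f_2\sim_{\text{q.-i.}}\Omega$.

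First I would fix the metric structure on $\Omega$. The natural choice is the restriction of a complete hyperbolic (Poincar\'e) metric: since $\mathcal{X}=\f_2\backslash\Omega$ is a compact Riemann surface of genus $2$, its universal-type covering domain $\Omega$ inherits a metric for which $\f_2$ acts by isometries. Concretely, one endows $\Omega$ with the pullback under the uniformizing map of the hyperbolic metric on $\mathcal{X}$, so that the covering $\Omega\to\mathcal{X}$ is a local isometry and $\f_2$ acts by deck transformations, hence isometrically. With this metric $\Omega$ is a geodesic metric space and the action is properly discontinuous (as already recorded in the excerpt via the fundamental domain $D$). Cocompactness is exactly the statement $\Omega=\bigcup_{\gamma\in\f_2}\gamma\cdot D$ with $D$ the exterior domain of the four circles, which has compact closure in $\Omega$; this is the geometric content of the Schottky description.

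Next I would verify the two hypotheses of the \v{S}varc--Milnor lemma in this setting. Proper discontinuity of $\f_2\circlearrowright\Omega$ and cocompactness via the relatively compact fundamental domain $D$ are precisely what the paragraph preceding the lemma supplies. The remaining point is that $\Omega$, with the chosen metric, is a length space---this holds because $\Omega$ is a connected open subset of a Riemann surface equipped with a smooth Riemannian metric, so any two points are joined by a rectifiable path and the path-length metric is well defined. Invoking \v{S}varc--Milnor then yields $\f_2\sim_{\text{q.-i.}}\Omega$, and composing with $\f_2\sim_{\text{q.-i.}}\mathcal{T}$ gives the claim $\mathcal{T}\sim_{\text{q.-i.}}\Omega$.

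The main obstacle, to my mind, is not the abstract invocation of \v{S}varc--Milnor but the careful bookkeeping of the metric on $\Omega$: one must check that the fundamental domain $D$ (the exterior of the four circles) is genuinely relatively compact \emph{in the intrinsic metric of $\Omega$}, i.e.\ that no sequence in $D$ escapes toward the limit set $S=\p^1(\com)\setminus\Omega$ without the metric detecting it. This is where the choice of the hyperbolic metric is essential: with the Euclidean metric of $\p^1(\com)$ the domain $\Omega$ is not complete and the action fails to be cocompact in the naive sense, whereas with the hyperbolic metric the limit set recedes to infinite distance and $D$ has compact closure. Once this metric is pinned down correctly, the rest of the argument is a direct application of the standard machinery, and I would keep the write-up short by citing \cite{gromov-2} for \v{S}varc--Milnor and for the fact that quasi-isometry is transitive.
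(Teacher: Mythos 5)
Your proposal is correct and follows essentially the same route as the paper: the paper states the lemma as an immediate consequence (``this amounts to'') of the two facts it has just recorded --- the free, transitive, isometric $\f_2$-action on $\mathcal{T}$ and the properly discontinuous, cocompact (Schottky) action $\f_2\circlearrowright\Omega$ with compact quotient $\mathcal{X}$ --- which is precisely the \v{S}varc--Milnor argument you spell out. Your write-up merely makes explicit the details the paper leaves implicit (the choice of the pulled-back hyperbolic metric on $\Omega$, under which the action is isometric and the space is proper and geodesic), so the two arguments coincide in substance.
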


Further, given $f \in G_e$ let us suppose for a moment that
$a(f^c) \ne f^c$ for all $a \in \f_2$, $c\in\widetilde{\li_n}$.
Identify $\mathcal{T}$ with its set of vertices
$\left\{a(f)\right\}_{a \in \f_2}$ and similarly introduce the
tree $\mathcal{T}^f_c:=\left\{a(f^c)\right\}_{a \in \f_2}$ (thus
$\mathcal{T}^f_c$ is another copy of $\mathcal{T} =
\mathcal{T}^f_1$). Then $\mathcal{T}^f_c$ carries a metric
$\di(*,*)$, coming from the word metric on $\f_2$, so that
$\di(a(f^c), b(f^c)) := \di(ab^{-1},1)$ for all
$a,b\in\f_2$.\footnote{~Note that $f^c\in G_e$ and $E_{f^c} = E_f$
for all $c$ (cf. Example~\ref{example:g-n-e}).} Now, gluing
$\mathcal{T}^f_c$ with $\mathcal{T}^f_{a(c)a}$ (isometrically) via
$b(f^c) \mapsto b^a(f^{a(c)a})$ for all $a, b\in \f_2$ (we regard
$a$, hence $a(c)a$, as an element in $\widetilde{\li_n}$ acting on
$f$ by conjugation), we may identify the metric space $$
\mathcal{C}_f :=
\displaystyle\bigsqcup_{c\in\widetilde{\li_n}}\mathcal{T}^f_c\slash_{\displaystyle\asymp}$$
with $\mathcal{T}$ (as sets). Here $\asymp$ is the equivalence
relation such that $b(f^c) \asymp b^a(f^{a(c)a})$ for all
$a,b,c$.\footnote{~In fact, we have $a'a(f) = a'(f^a) =
f^{a'(a)a'} = f^{a'a}$ for all $a',a\in \f_2$, which implies that
$\asymp$ is symmetric and transitive.} (Note also that the
assertion of Lemma~\ref{theorem:q-i-tree-omega} obviously holds
for $\mathcal{C}_f$ in place of $\mathcal{T}$.)

\begin{lemma}
\label{theorem:c-f-well-def} $\mathcal{C}_f$ is defined for
\emph{any} $f\in G_e$. More precisely, this $\mathcal{C}_f$ is
q.\,-\,i. to $\Omega$ and coincides with $T$ set-theoretically,
similarly as above.
\end{lemma}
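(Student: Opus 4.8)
The plan is to remove the auxiliary hypothesis that $a(f^c) \ne f^c$ for all $a\in\f_2$ and $c\in\li_n$, which was imposed only to guarantee that each $\mathcal{T}^f_c$ is a genuine (four-valent) copy of $\mathcal{T}$ with vertex set in bijection with $\f_2$. For a general $f\in G_e$ the stabiliser $E^c := \{a\in\f_2 : a(f^c)=f^c\}$ may be nontrivial, and one may only assert $a(f^c)\sim f^c$ (since $f^c\in G_e$). First I would record, as in the footnote following the definition of $\mathcal{C}_f$, that $E_{f^c}=E_f=\f_2$ and that the relation $\asymp$ remains symmetric and transitive regardless of whether the $\mathcal{T}^f_c$ degenerate; this uses only the cocycle-type identity $a'a(f)=f^{a'a}$, which holds verbatim.

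The heart of the matter is that, even when $E^c\ne\{1\}$, the correct object to form is not the raw disjoint union of vertex sets but the quotient graph. Concretely, I would set $\mathcal{T}^f_c := \f_2/E^c$ with the quotient metric descended from the word metric on $\f_2$, and glue $\mathcal{T}^f_c$ to $\mathcal{T}^f_{a(c)a}$ via $bE^c\mapsto b^a E^{a(c)a}$ exactly as before; the earlier computation shows this is well defined on cosets. The point is that $E^c$ is a subgroup of $\f_2$, hence free (Nielsen--Schreier), and the quotient $\f_2/E^c$, viewed as the vertex set with the induced graph structure inherited from $\mathcal{T}$, is quasi-isometric to $\mathcal{T}$ as soon as $E^c$ has infinite index — and one checks that the only coset structures arising here are either trivial or of infinite index, because a finite-index $E^c$ would force the conjugation orbit of $f^c$ under $\f_2$ to be finite, contradicting the construction of $G_e$ via $\dia_n$ in Example~\ref{example:g-n-e}. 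Therefore each piece is q.-i. to $\mathcal{T}$, and after the (isometric) gluing the whole of $\mathcal{C}_f$ collapses set-theoretically onto a single copy of $\mathcal{T}$, just as in the special case.

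Having identified $\mathcal{C}_f$ with $\mathcal{T}$ as sets and shown the gluing respects the metric up to quasi-isometry, the final assertion follows by transitivity of $\sim_{\text{q.-i.}}$: combine the identification $\mathcal{C}_f\sim_{\text{q.-i.}}\mathcal{T}$ with Lemma~\ref{theorem:q-i-tree-omega}, which gives $\mathcal{T}\sim_{\text{q.-i.}}\Omega$. I expect the main obstacle to be the bookkeeping around the degenerate pieces: one must verify that passing to the quotient $\f_2/E^c$ does not destroy the four-valent local structure needed for the quasi-isometry with $\Omega$, and that the gluing maps $bE^c\mapsto b^a E^{a(c)a}$ remain well defined and isometric at the level of cosets rather than of group elements. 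Once the infinite-index dichotomy for the $E^c$ is established, however, the quasi-isometry type is insensitive to these quotients, and the argument reduces cleanly to the case already treated in the paragraph preceding the lemma.
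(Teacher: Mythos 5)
Your approach diverges from the paper's and contains a genuine gap. The paper does not quotient anything: it keeps every $\mathcal{T}^f_c$ a literal copy of $\mathcal{T}$ by replacing the group elements $a(f^c)$ with \emph{formal labels}, namely the ordered pairs $\{a;c\}$. The point, justified by Corollary~\ref{theorem:rho-hom}, is that $\f_2$ injects into $\mathrm{Out}(\li_n)$, so conjugation by $a\in\f_2\setminus\{1\}$ never coincides with conjugation by any $c\in\li_n$; hence the pairs $\{a;c\}$ are pairwise distinct and consistently defined even when the underlying elements $a(f^c)$ collide. The trees are then built on these labels, the gluing $\asymp$ proceeds exactly as before, and $\mathcal{C}_f$ coincides with $\mathcal{T}$ set-theoretically --- which is precisely what the lemma asserts and what the proof of Proposition~\ref{theorem:m-2-good-generator} later requires (the map $\varphi$ there is defined on a free, transitive $\f_2$-orbit of vertices).

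Your quotient construction fails on two counts. First, the claimed dichotomy for $E^c$ is false: $G_e$ is defined by the \emph{conjugacy} condition $a(f)\sim f$, so nothing prevents the exact stabiliser $E^c$ from having finite index --- already $f=1$ lies in $G_e$ with $E^c=\f_2$, in which case your $\mathcal{C}_f$ is a single point, not a tree (your appeal to Example~\ref{example:g-n-e} does not help, since a general $f\in G_e$ need not lie in $\dia_n$). Second, and more seriously, even granting infinite index, the coset space $\f_2/E^c$ with the quotient metric need not be quasi-isometric to $\mathcal{T}$: take $E^c$ equal to the commutator subgroup $[\f_2,\f_2]$, which has infinite index; the quotient is $\cel^2$ with its word metric, which is amenable, one-ended and of polynomial growth, hence not q.-i. to the four-valent tree. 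Infinite index alone controls nothing about the quasi-isometry type of a quotient, so the step on which your whole reduction rests (``the quasi-isometry type is insensitive to these quotients'') is not merely unproved but false in general; the lemma is rescued only by refusing to quotient at all, as the paper does with its formal pairs.
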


\begin{proof}
We use the notation from Section~\ref{section:1-0-n}. Put ${\bf x}
:= [X_0\,:\,\cdots\,:\,X_n]$ and fix an arbitrary
$a\in\f_2\setminus{\{1\}}$. Then, since there is no
$c\in\widetilde{\li_n}$ such that $c^{-1}(g(c({\bf x}))) =
a^{-1}(g(a({\bf x})))$ for all $g\in \widetilde{\li_n}$ (because
$a\in\mathrm{Out}\,\widetilde{\li_n}$),\footnote{~Recall that
$a,g,c\in G$ act on $\p^n$. Then $c^{-1}(g(c({\bf x}))) =
a^{-1}(g(a({\bf x})))$ is understood as an identity between the
elements in $G$. In particular, this does not depend on the choice
of ${\bf x}$, so that the forthcoming $\{a;c\}$ is correctly
defined.} we can associate with $a(f^c)$ an \emph{ordered} pair
$\{a;c\}$. Now, since $\{a;c\}$ are all distinct for different
$a,c$, we repeat the previous construction of the trees
$\mathcal{T}^f_c$ (with $a(f^c)$ replaced by $\{a;c\}$). Finally,
we use the fact that $f\in G_e$ to glue the trees
$\mathcal{T}^f_c$ and $\mathcal{T}^f_{a(c)a}$ via $\asymp$ as
earlier, which gives $\mathcal{C}_f$ ($\sim_{\text{q.-i.}}\Omega$)
as wanted.
\end{proof}

\begin{remark}
\label{remark:warn} We should stress that the proof of
Lemma~\ref{theorem:c-f-well-def} really uses the specifics of
situation in order to define $\{a;c\}$ correctly. In general, for
the group $PGL_{n+1}(\com)$, say, and the automorphisms
$\f_2\subseteq\text{Aut}(\com)\subseteq\text{Out}$ acting on it,
or for the fundamental group $\pi_1(C,x)$ of a Riemann surface
$C\ni x$ acted by a free group of Dehn twists, the same
constructions of $G_{ne},G_e$, etc. carry on, but similar to the
above property (definition) of $\{a;c\}$ breaks because it
depends, non-trivially, on the choice of a basis for
$PGL_{n+1}(\com)$ (resp. a base point $x$ for $\pi_1(C,x)$).
\end{remark}

\begin{remark}
\label{remark:c-f} To say it in words, every $\mathcal{T}^f_c$ in
the definition of $\mathcal{C}_f$ corresponds to a ``coloring" of
$\mathcal{T}$ (one for each $c\in\widetilde{\li_n}$), compatible
with the $\f_2$\,-\,action (cf. Figure \ref{fig-1}). In turn, the
pairs $\{~;~\}$ from the proof of Lemma~\ref{theorem:c-f-well-def}
can be considered as ``\,local coordinates\," (with
$\mathcal{T}^f_c$ being ``\,local charts\,") on $\mathcal{T}$,
where the $\widetilde{\li_n}$\,-\,part corresponds to
``\,coordinate bases\,", while the $\f_2$\,-\,part is the
``\,coordinate values\,".
\end{remark}

In view of Lemma~\ref{theorem:c-f-well-def}, we will not
distinguish between $\mathcal{T}$ and $\mathcal{C}_f$ in what
follows, so that the tree $\mathcal{T}$ comes enhanced with an
additional structure (cf. Remark~\ref{remark:c-f}).

The next result may be considered as the glimpse of a certain
``\,Anosov property\," enjoyed by the elements from $G_e$, for one
may observe an analogy between the (hyperbolic) $\cel$\,-\,action
on $\text{Diff}$ (see the discussion in \cite[\S 2]{ambra-gromov}
or in \cite[\S 5]{gromov-hyp-manifolds} for instance) and the
$\f_2$\,-\,action on $G_e$ in our case, with assertions ``\,two
elements $f,g\in\text{Diff}$ are homotopic, $C^r$\,-\,close,
etc.\," being replaced by ``\,$f\sim g,~f,g \in G_e$\,".

\begin{proposition}
\label{theorem:m-2-good-generator} For every $f,g\in G_e$, the
group $E_{fg}$ is non\,-\,cyclic.\footnote{~The arguments below
work for the product of any $f_1,\ldots,f_m\in G_e$ and $m\geq
2$.}
\end{proposition}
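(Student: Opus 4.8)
The goal is to show that $E_{fg}$ contains a free subgroup on two generators (equivalently, is non-cyclic) whenever $f,g\in G_e$. My plan is to exploit the geometric structure attached to each element of $G_e$ by Lemma~\ref{theorem:c-f-well-def}, namely the enhanced tree $\mathcal{C}_f\sim_{\text{q.-i.}}\Omega$, and to argue that the product $fg$ inherits a $\mathcal{C}_{fg}$ of the \emph{same} quasi-isometry type. Since $E_{fg}$ is by definition the group of $a\in\f_2$ with $a(fg)\sim fg$, the strategy is to translate the statement ``$E_{fg}$ is non-cyclic'' into a statement about the action of $E_{fg}$ on $\mathcal{C}_{fg}$, and then to use the fact that a cyclic group cannot act on a space quasi-isometric to $\Omega$ in the way $\f_2$ does.

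First I would record that for $f,g\in G_e$ we have $a(fg)=a(f)a(g)$ for every $a\in\f_2$ (the $\f_2$-action being by conjugation in $\li_n$, hence a group automorphism). Combined with $a(f)\sim f$ and $a(g)\sim g$, this gives that $a(fg)$ is conjugate to a product of two elements each individually conjugate to $f$ and $g$ respectively; the first task is thus to pin down precisely when such a product stays in the conjugacy class $C_{fg}$. Here I would use Lemma~\ref{theorem:cruc-lm-1} together with \eqref{h-f} to control the leading data $\left<(fg)_i\right>$ under conjugation by elements of $\f_2\subseteq SL'_n(\cel)$, exactly as in the proof of Corollary~\ref{theorem:rho-hom}: conjugation by $a$ replaces the diagonal-type coefficients by monomial products $\prod_j\lambda_j^{k_{j,i}}$ with $\sum_j k_{j,i}=1$, and the condition $a(fg)\sim fg$ becomes a lattice condition on the exponent matrix of $a$. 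The point is that this condition cuts out a \emph{subgroup} $E_{fg}\subseteq\f_2$, which is what allows the tree-gluing construction of Lemma~\ref{theorem:c-f-well-def} to go through with $fg$ in place of $f$ and $E_{fg}$ in place of $\f_2$.

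Next, assuming $E_{fg}$ were cyclic, I would derive a contradiction with the quasi-isometry type. A cyclic group is q.-i. to a point or to $\re$; but the gluing construction of Lemma~\ref{theorem:c-f-well-def}, applied over the index set $E_{fg}$, would produce a space $\mathcal{C}_{fg}$ which is q.-i. to $\Omega$ only if $E_{fg}$ has enough elements to reconstruct the four-valent tree $\mathcal{T}$ of Lemma~\ref{theorem:q-i-tree-omega}. More concretely, I would show that $\mathcal{C}_{fg}$ is built as $\bigsqcup_{c\in\li_n}\mathcal{T}^{fg}_c/\!\asymp$ exactly as before, so that $\mathcal{C}_{fg}\sim_{\text{q.-i.}}\Omega$; on the other hand, the acting group on $\mathcal{C}_{fg}$ is $E_{fg}$, and a group acting freely, transitively and isometrically (up to the $\asymp$-gluing) on a space q.-i.\ to the four-valent tree $\Omega$ has exponential growth, hence cannot be cyclic. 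This is the heart of the ``Anosov''-flavored argument advertised before the statement.

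The main obstacle I anticipate is establishing that $\mathcal{C}_{fg}$ genuinely has the $\Omega$ quasi-isometry type \emph{intrinsically from} $E_{fg}$, rather than inheriting it from the ambient $\f_2$: one must verify that the pairs $\{a;c\}$ separating the ``charts'' remain well-defined when $a$ ranges only over $E_{fg}$, i.e.\ that $a\in E_{fg}\setminus\{1\}$ still acts as a nontrivial outer automorphism of $\li_n$ (so that no $c\in\li_n$ satisfies $c^{-1}(h(c(\mathbf{x})))=a^{-1}(h(a(\mathbf{x})))$ for all $h$). This is exactly where the injectivity of $\f_2\hookrightarrow\mathrm{Out}(\li_n)$ from Corollary~\ref{theorem:rho-hom} is needed, restricted to $E_{fg}$. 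Once that separation is in place, the gluing of Lemma~\ref{theorem:c-f-well-def} reproduces $\Omega$ up to quasi-isometry, and the growth/non-cyclicity conclusion follows; I would close by remarking that the same argument applies verbatim to any product $f_1\cdots f_m$ with $m\ge 2$, as noted in the footnote to the statement.
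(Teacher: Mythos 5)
There is a genuine gap at the heart of your argument, namely in the step where you claim that the gluing construction, ``applied over the index set $E_{fg}$'', produces a space $\mathcal{C}_{fg}\sim_{\text{q.-i.}}\Omega$. The construction of Lemma~\ref{theorem:c-f-well-def} needs the element at hand to lie in $G_e$: the charts $\mathcal{T}^f_c$ and $\mathcal{T}^f_{a(c)a}$ can be glued compatibly with the $\f_2$-action precisely because $a(f)\sim f$ for \emph{all} $a\in\f_2$. For the product $fg$ this is exactly what you do not know --- it is what the proposition is trying to establish in a weak form --- and under your contradiction hypothesis that $E_{fg}$ is cyclic it is false. So you face a dichotomy: if you run the construction over the full $\f_2$, the gluings across charts are unavailable (for $a\notin E_{fg}$ the relation $a((fg)^c)\sim (fg)^c$ fails); if instead you run it only over $E_{fg}=\left<b\right>$, the resulting space is quasi-isometric to the Cayley graph of $E_{fg}$, i.e.\ to $\re$, not to the four-valent tree, and a cyclic group acts freely, transitively and isometrically on $\re$ with no contradiction whatsoever. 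Your growth argument is therefore circular: the assertion that the space is q.-i.\ to $\Omega$ already presupposes that $E_{fg}$ is large, which is the very thing being proved.

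The paper avoids this trap by never constructing $\mathcal{C}_{fg}$. Instead it glues $\mathcal{C}_f$ to $\mathcal{C}_g$ (both exist, since $f,g\in G_e$) via $a(f^c)\asymp a(g^c)$, and then builds a \emph{formal} tree $\mathcal{T}'$ whose vertices are the conjugacy classes $C_{a(f^cg^{c'})}$ regarded as formal triples $\{a;c,c'\}$ --- a construction requiring no hypothesis on $fg$ at all, so that $\mathcal{T}'$ is an honest copy of $\mathcal{T}$ with a free, transitive, isometric $\f_2$-action. The cyclicity assumption $E_{fg}=\left<b\right>$ enters only through Lemma~\ref{theorem:map-phi-t-prime}: the natural $\f_2$-equivariant continuous surjection $\varphi:\mathcal{T}\longrightarrow\mathcal{T}'$, given by $a(f)\asymp a(g)\mapsto C_{a(fg)}$, coincides on vertices with the quotient map $\f_2\to\f_2/\left<b\right>$. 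Combining this with Lemma~\ref{theorem:q-i-tree-omega}, $\varphi$ would induce a $1$-cycle fibration $\Omega\longrightarrow\Omega$, and the contradiction is \emph{topological} --- no such fibration of a domain in $\p^1(\com)$ over itself exists --- rather than a growth estimate. To salvage your approach you would need to replace the nonexistent space $\mathcal{C}_{fg}$ by an object defined unconditionally on which the cyclicity of $E_{fg}$ leaves a visible trace; that is exactly the role played by $\mathcal{T}'$ and the map $\varphi$ in the paper.
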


\begin{proof}
Suppose that $E_{fg} = \left<b\right>$ for some $b\in\f_2$. Let us
glue $\mathcal{C}_f$ with $\mathcal{C}_g$ as follows:
$$
a(f^c) \asymp a(g^c)\ \text{for all}\ a\in
\f_2,\,c\in\widetilde{\li_n}.
$$
(Obviously, the latter $\asymp$ is compatible with the equivalence
relation used to construct $\mathcal{C}_f$ and $\mathcal{C}_g$
above, and so we keep the same symbol for both.) Again, since
$\f_2\subseteq\mathrm{Out}\,\widetilde{\li_n}$ and $f,g \in G_e$,
this construction is compatible with the $\f_2$\,-\,action. In
particular (to simplify the notation), we will assume that $a(f^c)
\ne f^c$, $a(g^c) \ne g^c$ for all $a \in \f_2$,
$c\in\widetilde{\li_n}$, as in Figure \ref{fig-1} below.

Further, in the preceding definition of the trees
$\mathcal{T}^f_c$ we can formally replace each $a(f^c)$ by the
conjugacy class $C_{a(f^cg^{c'})}$, with arbitrary $a\in\f_2$,
$c,c'\in\widetilde{\li_n}$, where again $C_{a(f^cg^{c'})}$ is
regarded as a (``\,\{value; coordinate\}\,") triple $\{a;c,c'\}$,
analogous to that in the proof of
Lemma~\ref{theorem:c-f-well-def}. Then we repeat the isometric
gluings of $\mathcal{T}^f_c$ with $\mathcal{T}^f_{a(c)a}$ (resp.
of $\mathcal{T}^g_{c'}$ with $\mathcal{T}^g_{a(c')a}$) to get the
tree $\mathcal{T}'\sim_{\text{q.-i.}}\mathcal{T}$ (the former
being (formally) identified with $\{C_{a(fg)}\}_{a\in\f_2}$),
carrying a free, transitive and isometric $\f_2$\,-\,action. Thus
the construction/enhancement of $\mathcal{T}'$ is essentially the
repetition verbatim of that for $\mathcal{T}$.

\begin{lemma}
\label{theorem:map-phi-t-prime} There is an $\f_2$\,-\,equivariant
(\emph{continuous}) map $\varphi: \mathcal{T} \longrightarrow
\mathcal{T}'$ of metric spaces which coincides with the quotient
map $\f_2 \to \f_2/\left<b\right>$ on the sets of vertices. In
particular, $\varphi$ is surjective.
\end{lemma}

\begin{proof}
Identify $\mathcal{T}$ with its chart
$\mathcal{T}^f_1\asymp\mathcal{T}^g_1$ and set $\varphi$ to be as
follows:
\begin{equation}
\label{eq-varphi-act} a(f) \asymp a(g) \mapsto
C_{a(fg)},~a\in\f_2.
\end{equation}
(Here we use the chart $\{C_{a(fg)}\}_{a\in\f_2}$ for
$\mathcal{T}'$ as well.) Clearly, the (set\,-\,theoretic) map
$\varphi$ is surjective and coincides with $\f_2 \to
\f_2/\left<b\right>$ on the sets of vertices, since $E_{fg} =
\left<b\right>$.

In order to extend $\varphi$ to a \emph{metric morphism} it
suffices to show the definition of $\varphi$ does not depend on
the ($\f_2$\,-\,equivariant) identification of $\mathcal{T}$ with
$\mathcal{T}^f_1$ (aka $\mathcal{T}^g_1$). This will follow once
we check the definition of $\varphi$ does not depend on replacing
$a$ by $a'a$ for an arbitrary fixed $a'\in\f_2$ and all $a$. But
that is why we need the \emph{enhanced} $\mathcal{T}$ and
$\mathcal{T}'$. Namely, regarding (as usual) $a'$ as an element in
$\widetilde{\li_n}$ acting on $f,g$ by conjugation, we simply pass
to the chart $\mathcal{T}^f_{a'}\asymp\mathcal{T}^g_{a'}$ of
$\mathcal{T}$ so that $\varphi$ now acts like this:
$$
a'aa'^{-1}(f^{a'}) \asymp a'aa'^{-1}(g^{a'}) \mapsto
C_{a'aa'^{-1}(f^{a'}g^{a'})}.
$$
(Here $\mathcal{T}'$ is also considered in its other chart
$\{C_{a(f^cg^{c'})}\}_{a\in\f_2}$.) Thus we have replaced
$\mathcal{T},\mathcal{T}'$ by their isometric copies and defined
$\varphi$ for these also (compatibly with \eqref{eq-varphi-act}).
This is correct because the charts
$\mathcal{T}^f_1\asymp\mathcal{T}^g_1$,
$\mathcal{T}^f_{a'}\asymp\mathcal{T}^g_{a'}$, etc. are (formally)
distinct by construction.
\end{proof}

Lemmas~\ref{theorem:q-i-tree-omega} and
\ref{theorem:map-phi-t-prime} yield a $1$\,-\,cycle fibration
$\Omega \longrightarrow\Omega$ which is q.\,-\,i. to $\varphi$:
\begin{figure}[h]
\includegraphics[scale=0.9]{crem.1}
\caption{~}\label{fig-1}
\end{figure}


But the latter is impossible for the domains in $\p^1(\com)$.
Proposition~\ref{theorem:m-2-good-generator} is proved.
\end{proof}
The set $G_e$ generates a normal subgroup $N$ in
$\widetilde{\li_n}$ with $N \cap \left<\slo_n\right> \supseteq
G_e\cap\dia_n \ne \{1\}$ (cf.
Section~\ref{section:theorem-group-e-n} and
Example~\ref{example:g-n-e}). We will see in
Section~\ref{section:proof-of-theorem} that the complement
$\widetilde{\li_n} \setminus G_{ne}\sqcup G_e$ contains an element
$\Lambda$ such that the group $E_{\Lambda}$ is \emph{cyclic}. This
together with Proposition~\ref{theorem:m-2-good-generator} implies
that $N \vartriangleleft\widetilde{\li_n}$. We will show that in
fact $\widetilde{\li_n}\setminus N \ni \Lambda$ for some
$\Lambda\in\left<\slo_n\right>\setminus\kr(\xi)$ (cf.
Proposition~\ref{theorem:sub-group-surj}), which easily yields
$\xi(N\cap\left<\slo_n\right>)\vartriangleleft\au_{n - 1}$ (see
the discussion after Corollary~\ref{theorem:cor-res-d} below),
hence Theorem~\ref{theorem:main}. (Notice by the way that
$N\cap\left<\slo_n\right>\not\subset \kr(\xi)$ because
$G_e\cap\dia_n\not\subset \kr(\xi)$.)

\begin{remark}
\label{remark:g-e-g-ne} It would be interesting to test
non\,-\,simplicity of any group $G$ satisfying $\f_2 \subseteq
\mathrm{Out}\,G$ and $G_e\cdot G_e\subseteq G_{ne}\sqcup G_e$ (cf.
Question~\ref{theorem:stupid} below).
\end{remark}

\bigskip

\section{Proof of Theorem~\ref{theorem:main}}
\label{section:proof-of-theorem}

\refstepcounter{equation}
\subsection{}
\label{subsection:free-aut}

We keep up with the previous notation. Let us assume in addition
that $n \geq 4$. Consider
$\Lambda\in\slo_n\subset\widetilde{\li_n}$ defined as follows:
\begin{eqnarray}
\label{eq-of-lam} X_i \mapsto X_0^{d - 1}X_i,\\
\nonumber X_4 \mapsto X_0^{d-1}X_4 + \Lambda_{d}(X_2, X_4, \ldots,
X_n)
\end{eqnarray}
for all $i \ne 0,1,4$ (cf. \eqref{eq-1-1-0-m}). Let us also
consider $a_1, a_2 \in SL_n'(\cel)$ defined by
$$
a_1:\, [1\,:\,X_1\,:\,\cdots\,:\,X_n] \mapsto
[1\,:\,X_1\,:\,X_2\,:\,X^{I_1}\,:\,X_4\,:\,\cdots\,:\,X_n],
$$
$$
a_2:\, [1\,:\,X_1\,:\,\cdots\,:\,X_n] \mapsto
[1\,:\,X_1\,:\,X^{I_2}\,:\,X_3\,:\,\cdots\,:\,X_n]
$$
for $I_1 := (1, -1, 1, 0, \ldots, 0)$ and $I_2 := (-1, 1, 1, 0,
\ldots, 0)$ (cf. Example~\ref{example:ex-1-1-0-n}).

\begin{lemma}
\label{theorem:a-1-a-2-f-2} The elements $a^2_1, a^2_2$ generate a
free subgroup $\f_2\subseteq SL_n'(\cel)$.
\end{lemma}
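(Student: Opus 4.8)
The plan is to reduce the assertion to the freeness of the classical Sanov subgroup of $SL_2(\cel)$. First I would record $a_1^2$ and $a_2^2$ as explicit transvections. By Example~\ref{example:ex-1-1-0-n}, each $a_i$ is the monomial map whose associated matrix in $SL_n'(\cel)$ has $j$-th column the exponent vector of the image of $X_j$; thus $a_1 = I + (e_1-e_2)e_3^{\,T}$ and $a_2 = I + (e_3-e_1)e_2^{\,T}$, both acting trivially on the coordinates $X_4,\ldots,X_n$. Since $e_3^{\,T}(e_1-e_2) = e_2^{\,T}(e_3-e_1) = 0$, these are unipotent with square
\[
a_1^2 = I + 2(e_1-e_2)e_3^{\,T},\qquad a_2^2 = I + 2(e_3-e_1)e_2^{\,T},
\]
i.e. transvections supported on the span of $e_1,e_2,e_3$.

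Next I would restrict the action to a two-dimensional subspace. Both matrices lie in $SL_n'(\cel)$, hence preserve the hyperplane $V_0 := (\sum_i X_i = 0)$ and act trivially on $\re^n/V_0$; together with their triviality on $X_4,\ldots,X_n$ this confines the essential action to $W := V_0 \cap \langle e_1,e_2,e_3\rangle$, of dimension $2$. Writing a general vector of $W$ as $(s,t,-s-t)$, a direct computation gives
\[
a_1^2\big|_W = \begin{pmatrix} -1 & -2 \\ 2 & 3\end{pmatrix},\qquad a_2^2\big|_W = \begin{pmatrix} 1 & -2 \\ 0 & 1\end{pmatrix}.
\]
Both are parabolic with distinct fixed points on $\p^1$; conjugating by $P = \left(\begin{smallmatrix}1&1\\0&1\end{smallmatrix}\right)$, which moves the fixed point of the first to $0$ and keeps that of the second at $\infty$, turns them into $\left(\begin{smallmatrix}1&0\\2&1\end{smallmatrix}\right)$ and $\left(\begin{smallmatrix}1&-2\\0&1\end{smallmatrix}\right)$. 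Up to inverting the second generator this is the standard Sanov pair, which generates a free group of rank $2$ by the ping-pong lemma applied to the horoball neighbourhoods of $0$ and $\infty$ in $\re^2$.

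Finally I would upgrade this to the abstract group $\langle a_1^2, a_2^2\rangle$, avoiding any separate proof that the restriction to $W$ is faithful on it. The composite
\[
\f_2 \twoheadrightarrow \langle a_1^2,a_2^2\rangle \xrightarrow{\ \cdot|_W\ } \left\langle a_1^2|_W,\, a_2^2|_W\right\rangle,
\]
sending the two free generators to $a_1^2, a_2^2$, is by the previous paragraph an isomorphism onto a free group of rank $2$; in particular it is injective, so the first arrow is injective and $\langle a_1^2, a_2^2\rangle \cong \f_2$. The only external input is the freeness of the Sanov subgroup (the ping-pong estimate), and the sole computation to carry out is the restriction to $W$ together with the conjugation by $P$; the rest is formal. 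The main point to get right is the bookkeeping of the two \emph{distinct} parabolic fixed points, since a pair of parabolics sharing a fixed point (or with product of strengths below the ping-pong threshold) need not generate a free group.
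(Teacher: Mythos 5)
Your proposal is correct and follows essentially the same route as the paper: both pass to a two-dimensional integral representation of $\langle a_1^2,a_2^2\rangle$ in which the generators become (up to conjugation, transposition and inverting one of them) the Sanov pair $\left(\begin{smallmatrix}1&2\\0&1\end{smallmatrix}\right),\left(\begin{smallmatrix}1&0\\2&1\end{smallmatrix}\right)$, invoke the Ping-Pong Lemma for its freeness, and then pull freeness back through the surjection $\f_2\twoheadrightarrow\langle a_1^2,a_2^2\rangle$ since free generators map to free generators. The only cosmetic difference is that you realize this representation by restricting to the invariant plane $W=V_0\cap\langle e_1,e_2,e_3\rangle$, whereas the paper obtains the same $2\times 2$ picture as a quotient-type representation by bordering with the auxiliary matrix $\bigstar$ and reading off the relevant entries.
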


\begin{proof}
Take a matrix $\bigstar := \begin{pmatrix} 1 & 0 & 0
\\ 0 & 1 & 0 \\ 0 & a & b
\end{pmatrix}$ with some $a,b\in\com$. Then the matrix $X' := \bigstar a_1$ has entries $X'_{2,3}
= a,X'_{3,3} = -a + b$. Similarly, $X'' := \bigstar a_2$ has
$X''_{2,3} = a + b,X''_{3,3} = b$. Letting $a := 1$, $b := 0$ and
$a := 0$, $b := 1$ we obtain a homomorphism from the group
generated by $a^2_1$, $a^2_2$ onto the subgroup $\Gamma \subseteq
SL_2(\cel)$ generated by the matrices $\begin{pmatrix} 1 & 0 \\
-2 & 1\end{pmatrix}$ and $\begin{pmatrix} 1 & 2
\\ 0 & 1\end{pmatrix}$. Hence it suffices to show that $\Gamma
\simeq\f_2$. But the latter follows from the Ping\,-\,Pong Lemma.
\end{proof}

Notice that $a_1\Lambda a_1^{-1} = \Lambda$ (i.\,e. $a_1 \in
E_{\Lambda}$ in the notation of {\ref{subsection:gens}}). On the
other hand, we have the following:

\begin{proposition}
\label{theorem:not-invariant} There exists $\Lambda$ of the form
\eqref{eq-of-lam} such that $a\Lambda a^{-1} \not\sim \Lambda$ for
any $a \in \f_2\setminus{\{a_1^k\}}_{k\in\mathbb{Z}}$.
\end{proposition}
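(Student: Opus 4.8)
The plan is to exploit the explicit shape of the map $\Lambda$ in \eqref{eq-of-lam} together with the valuation $\rho$ and Lemma~\ref{theorem:cruc-lm-1}, reducing the conjugacy relation $a\Lambda a^{-1}\sim\Lambda$ to a rigid combinatorial condition on the matrix $\rho(a)=M_a\in SL'_n(\cel)$. First I would observe that $a_1$ was engineered precisely so that $a_1\Lambda a_1^{-1}=\Lambda$: the column $I_1=(1,-1,1,0,\ldots,0)$ fixes the factor $X_0^{d-1}+X_1^{d-1}$ and the ``mixed" term $\Lambda_d(X_1,X_2,X_4,\ldots,X_n)$ up to the symmetry already present in $\Lambda$. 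The whole point of \eqref{eq-of-lam} is that its nonlinear part lives in the variables $X_1,X_2,X_4,\ldots,X_n$ and treats $X_3$ only through the uniform scaling $(X_0^{d-1}+X_1^{d-1})X_3$, so the stabilizer of $\Lambda$ (up to conjugacy) inside $\f_2=\langle a_1^2,a_2^2\rangle$ should see only the $a_1$-direction.

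Next I would set up the combinatorial invariant. Since $\Lambda\in\slo_n\subset\li_n=\kr(\rho)$, for any $a\in\f_2\subseteq SL'_n(\cel)$ the conjugate $a\Lambda a^{-1}$ has $\rho(a\Lambda a^{-1})=\rho(a)\rho(\Lambda)\rho(a)^{-1}=1$, so $\rho$ alone cannot separate the conjugates. Instead I would apply the finer data of Lemma~\ref{theorem:cruc-lm-1}, tracking how the \emph{support} and the leading monomials $\langle(a\Lambda a^{-1})_i\rangle$ transform under the monomial substitution $a$. Concretely, conjugation by $a$ replaces each variable $X_j$ by the Laurent monomial $X^{J_j}$ (the $j$-th column of $M_a$), and then re-applies $a^{-1}$; the effect on $\Lambda$ is to carry the exponent vectors appearing in $\Lambda_d$ by the linear action of $M_a$. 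The condition $a\Lambda a^{-1}\sim\Lambda$ forces the multiset of these transported exponent vectors to coincide, up to the $\dia_n$-conjugacy that $\sim$ allows, with the original support of $\Lambda$. This pins down $M_a$ to lie in the stabilizer of the Newton body of $\Lambda$ (cf. Remark~\ref{remark:askold}), which by the specific choice of $\Lambda_d$ I would arrange to be exactly the cyclic group generated by $\rho(a_1)$.

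The main obstacle, and the step deserving the most care, is verifying that the \emph{only} elements of $\f_2$ preserving this support structure are the powers $a_1^k$ — i.e. ruling out accidental coincidences coming from the $\dia_n$-ambiguity in $\sim$ and from cancellations among the monomials of $\Lambda_d$. To handle this I would choose $\Lambda_d(X_1,X_2,X_4,\ldots,X_n)$ to be a sufficiently generic polynomial whose exponent vectors span a lattice on which $a_2$ (equivalently $\rho(a_2)$, the matrix $\begin{pmatrix}1&2\\0&1\end{pmatrix}$-type generator from Lemma~\ref{theorem:a-1-a-2-f-2}) acts nontrivially, so that applying any word $a\in\f_2$ involving $a_2$ genuinely displaces the support and hence $I_{\Lambda_j}$ changes. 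Because $\f_2=\langle a_1^2,a_2^2\rangle$ is free, any $a\notin\{a_1^k\}$ reduces to a word containing a nontrivial power of $a_2^2$; using the free-group normal form together with the Ping-Pong dynamics already invoked in Lemma~\ref{theorem:a-1-a-2-f-2}, I would show the corresponding linear action on the exponent lattice never returns the support to itself.

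Finally I would conclude that for this $\Lambda$ the stabilizer $E_\Lambda=\{a\in\f_2:a(\Lambda)\sim\Lambda\}$ equals $\{a_1^k\}_{k\in\cel}$, which is precisely the \emph{cyclic} group needed in Section~\ref{section:intermedia-group} to place $\Lambda$ in the complement $\li_n\setminus(G_{ne}\sqcup G_e)$ and thereby force $N\vartriangleleft\li_n$. The existence of at least one such $\Lambda$ (rather than a generic statement) is all that is required, so I would exhibit one explicit admissible $\Lambda_d$ and check the displacement condition on its finitely many exponent vectors directly.
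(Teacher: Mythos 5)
The core step of your reduction --- that $a\Lambda a^{-1}\sim\Lambda$ forces the support of $\Lambda$ to be carried to itself by the linear action of $M_a$, ``up to the $\dia_n$-conjugacy that $\sim$ allows'' --- is a genuine gap, and it is exactly the difficulty the paper's proof is built to circumvent. The relation $\sim$ is conjugacy by an \emph{arbitrary} $f\in\li_n=\kr(\rho)$, not by $\dia_n$ or by monomial maps: $\li_n$ contains, for instance, all of $\slo_n$ and all unimodular automorphisms fixing the origin with identity Jacobi matrix (Example~\ref{example:ex-1-1-0-n}), and conjugation by such nonlinear maps does not act on exponent vectors through any matrix; it creates and destroys monomials in a way that no bookkeeping of the support of $\Lambda$ controls, so there is no reason supp$(f\Lambda f^{-1})$ should agree with supp$(\Lambda)$ at all. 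Lemma~\ref{theorem:cruc-lm-1} (equivalently \eqref{h-f}) governs only the single leading monomial $\left<h\right>$, and for $f\in\li_n$ (where $M_f$ is the identity) it yields precisely the statement you already concede is useless, namely $\rho(f\Lambda f^{-1})=\rho(\Lambda)$. Worse, by Remark~\ref{remark:askold} the Newton convex body of \emph{every} element of $\slo_n$ is the standard simplex, so ``the stabilizer of the Newton body of $\Lambda$'' cannot be arranged to equal $\left<\rho(a_1)\right>$ no matter how generically $\Lambda_d$ is chosen: the datum you propose is either not an invariant of the $\li_n$-conjugacy class, or is an invariant that fails to separate.

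The paper instead isolates a property that is both visible on $a_2\Lambda a_2^{-1}$ and provably stable under $\li_n$-conjugation: contraction of the hyperplane $H=(X_3=0)$. Choosing $\Lambda_d\ne 0 \mod (X_1,X_4,\ldots,X_n)$, a direct monomial substitution shows that $a_2\Lambda a_2^{-1}$ contracts $H$ to a point. On the other side, Lemma~\ref{theorem:not-invariant-1} shows that $f\Lambda f^{-1}$ contracts no such hyperplane for any $f\in\li_n$: for $d\gg 1$, using $\Lambda(O)=O$ and $\rho(f)=1$, one argues (locally near the fixed point $O$, writing $f^{-1}_i/f^{-1}_0 = X_i+\varepsilon$) that $f\Lambda f^{-1}$ is asymptotically equal to $\Lambda$, hence contracts nothing. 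This gives $a_2\Lambda a_2^{-1}\ne f\Lambda f^{-1}$ for every $f\in\li_n$, which is the proposition for $a=a_2$, the general $a\notin\{a_1^k\}_{k\in\cel}$ being treated similarly. To salvage your combinatorial approach you would first need to prove that some support-type datum of $\Lambda$ is unchanged by conjugation by all of $\li_n$ --- and that missing statement is essentially Lemma~\ref{theorem:not-invariant-1} in disguise.
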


\begin{proof}
Suppose that $a\Lambda a^{-1} = g\Lambda g^{-1}$ for some
$a\in\f_2\setminus{\{a_1^k\}}_{k\in\mathbb{Z}}$ and
$g\in\widetilde{\li_n}$. We will exclude only the case $a := a_2$
for the general case is treated similarly. The map $a_2\Lambda
a_2^{-1}$ acts as follows:
\begin{eqnarray}
\nonumber X_i \mapsto X_3^dX_0^{d - 1}X_i,\\
\nonumber X_4 \mapsto X_0^{d-1}X_3^dX_4 + \Lambda_{d}(X_1X_2,
X_4X_3, \ldots, X_nX_3)
\end{eqnarray}
for all $i \ne 0,1,4$. At this stage we assume that $\Lambda_d \ne
0 \mod (X_4, \ldots, X_n)$. Then the map $a_2\Lambda a_2^{-1}$
contracts the hyperplane $H := (X_3 = 0)$ to a point. On the other
hand, we have

\begin{lemma}
\label{theorem:not-invariant-1} $g\Lambda g^{-1}$ does not
contract $H$.
\end{lemma}

\begin{proof}
Write $g = g_1 \cdot g_2$ as in the proof of
Lemma~\ref{theorem:tilde-g-aut}. It suffices to consider only the
case when $g_1 = 1$ (for $g_1 \in PGL_n(\com)$ a priori).

Put $g^{-1}_i/g^{-1}_0 := X_i + \varepsilon$ for the components of
$g^{-1}$, $1\leq i\leq n$, and some (varying) $0\leq \varepsilon
\ll 1$. Let also $O := [0\,:\,1\,:\,0\,:\,\cdots\,:\,0]$.

Now, since $g = g_2 \in \li_n$ and $\Lambda(O) = O$, one can
easily see that $g\Lambda g^{-1}$ asymptotically equals $\Lambda$.
More precisely, as in the proof of Lemma~\ref{theorem:cruc-lm-1}
one finds that $(\Lambda g^{-1})_i = \Lambda_i + \varepsilon$ for
the components of $\Lambda g^{-1}$ and $\Lambda$, respectively.
Furthermore, since $\varepsilon$ can be expressed as an analytic
function in $\Lambda_i$, locally near the point $O$, by the same
argument (with $X_i$ and $\Lambda_i$ interchanged) we get
$(g\Lambda g^{-1})_i = \Lambda_i + \varepsilon$ for the components
of $g(\Lambda g^{-1})$. Thus $g\Lambda g^{-1}$ can not contract
$H$.
\end{proof}

From Lemma~\ref{theorem:not-invariant-1} we obtain a contradiction
$a_2\Lambda a_2^{-1} \ne g\Lambda g^{-1}$. This finishes the proof
of Proposition~\ref{theorem:not-invariant}.
\end{proof}

Let $\li^{\Pi}_n\subseteq\widetilde{\li_n}$ be the maximal
subgroup preserving the hyperplane $\Pi = (X_1 = 0)$. Take
$\Lambda$ as in the proof of
Proposition~\ref{theorem:not-invariant} and let $\Lambda_0$ be the
restriction of $\Lambda$ to $\Pi$. Then from (the proof of)
Proposition~\ref{theorem:not-invariant} we get the following (with
respect to the induced $\rho(G)$\,-\,action on $\Pi$):

\begin{corollary}
\label{theorem:cor-res-d} $a_1\Lambda_0 a_1^{-1} = \Lambda_0$ and
$a\Lambda_0 a^{-1} \not\sim \Lambda_0$ in
$\li^{\Pi}_n\big\vert_{\Pi}$ for every $a \in
\f_2\setminus{\{a_1^k\}}_{k\in\mathbb{Z}}$.
\end{corollary}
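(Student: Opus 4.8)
The plan is to descend the proof of Proposition~\ref{theorem:not-invariant} to the hyperplane $\Pi = (X_0 = 0)$, so the first point to settle is that everything in sight preserves $\Pi$. Writing $a_1,a_2$ as honest monomial transformations of $\p^n$ one finds $a_1 : [X_0:\ldots:X_n] \mapsto [X_0X_2 : X_1X_2 : X_2^2 : X_1X_3 : X_2X_4 : \ldots : X_2X_n]$ and $a_2 : [X_0:\ldots:X_n] \mapsto [X_0X_1 : X_1^2 : X_2X_3 : X_1X_3 : X_1X_4 : \ldots : X_1X_n]$, whose zeroth components are each divisible by $X_0$; hence $a_1,a_2$, and therefore all of $\f_2 = \langle a_1^2, a_2^2\rangle$, preserve $\Pi$. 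Since $\Lambda \in \slo_n$ preserves $\Pi$ (Lemma~\ref{theorem:sub-group-surj-2}) and every element of $\li'_n$ does by definition, the conjugation action of $\f_2$ descends to $\Pi$ and is compatible with the induced $\rho(G)$-action there. The equality $a_1\Lambda_0 a_1^{-1} = \Lambda_0$ is then immediate: it is the restriction to $\Pi$ of the identity $a_1\Lambda a_1^{-1} = \Lambda$ recorded just before Proposition~\ref{theorem:not-invariant}, which is legitimate precisely because $a_1$ preserves $\Pi$.

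For the non-conjugacy I would rerun the contraction argument inside $\Pi \cong \p^{n-1}$, treating the representative case $a = a_2$ (a general $a \in \f_2 \setminus \{a_1^k\}_{k \in \cel}$ being analogous). Restricting the explicit formula for $a_2\Lambda a_2^{-1}$ from the proof of Proposition~\ref{theorem:not-invariant} to $X_0 = 0$ yields, on $\Pi$, the map $X_i \mapsto X_3^d X_1^{d-1} X_i$ for $i \neq 4$ and $X_4 \mapsto \Lambda_d(X_1X_3, X_1X_2, X_4X_3, \ldots, X_nX_3)$. Setting $X_3 = 0$ annihilates every component but the fourth, while the standing assumption $\Lambda_d \neq 0 \bmod (X_1, X_4, \ldots, X_n)$ — i.e. the presence of a term $cX_2^d$ with $c \neq 0$ — turns the fourth component into $c(X_1X_2)^d \neq 0$. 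Thus $a_2\Lambda_0 a_2^{-1}$ contracts the hyperplane $(X_3 = 0) \subset \Pi$ to the single point $[0:\ldots:0:1:0:\ldots:0]$.

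Conversely, I would invoke the restriction of Lemma~\ref{theorem:not-invariant-1} to exclude any such contraction coming from conjugation inside $\li'_n\big\vert_{\Pi}$. For $d \gg 1$ the asymptotic identity $(f\Lambda f^{-1})_i = \Lambda_i + \varepsilon$ descends to $\Pi$, exhibiting $f\Lambda_0 f^{-1}$ as an arbitrarily small perturbation of the birational map $\Lambda_0 = \Lambda^*$, which contracts no hyperplane of $\Pi$. Comparing with the previous paragraph forces $a_2\Lambda_0 a_2^{-1} \neq f\Lambda_0 f^{-1}$ for every $f \in \li'_n\big\vert_{\Pi}$, that is, $a_2\Lambda_0 a_2^{-1} \not\sim \Lambda_0$, as wanted.

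I expect the last step to be the delicate one. The perturbation estimates in Lemma~\ref{theorem:not-invariant-1} are governed by the lexicographic valuation $v$ of {\ref{subsection:val-v}} on $\com(X_1/X_0, \ldots, X_n/X_0)$, and I must check that specializing $X_0 = 0$ does not spoil them — equivalently, that restriction to $\Pi$ is compatible with the $\li_n$-action and preserves the non-contraction of $\Lambda_0$. Confirming that $\Lambda_0$ is genuinely birational on $\Pi$ (hence contracts no hyperplane) and that $f \mapsto f\big\vert_{\Pi}$ surjects $\li'_n$ onto $\li'_n\big\vert_{\Pi}$ are the two auxiliary facts that will need to be nailed down with care.
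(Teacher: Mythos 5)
Your proposal is correct and follows essentially the same route as the paper, whose entire proof of this corollary is the one-line assertion that (the proof of) Proposition~\ref{theorem:not-invariant} descends to $\Pi$: you simply make that descent explicit by checking that $a_1,a_2$, $\Lambda$ and $\li'_n$ preserve $\Pi$, restricting the contraction of $(X_3=0)$ and the perturbation argument of Lemma~\ref{theorem:not-invariant-1} modulo $X_0$. The details you flag as delicate (compatibility of the restriction with the $\li_n$-action, surjectivity of $\li'_n\to\li'_n\big\vert_{\Pi}$) are exactly what the paper glosses over with the phrase ``apply literarilly,'' so your filled-in version is, if anything, more careful than the original.
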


We have $\slo_n\subseteq\li^{\Pi}_n$ and
$\f_2\subseteq\mathrm{Out}\,\li^{\Pi}_n$ via the induced
$\rho(G)$\,-\,action on $\Pi$ (cf.
Lemmas~\ref{theorem:a-1-a-2-f-2} and \ref{theorem:tilde-g-aut}).
Then the arguments of Section~\ref{section:intermedia-group}, with
the extra condition ``\,modulo $X_1$\," added, apply verbatim to
show that $(N\cap\li^{\Pi}_n)\big\vert_{\Pi}$ is a proper normal
subgroup of $\li^{\Pi}_n\big\vert_{\Pi}$ such that
$(N\cap\left<\slo_n\right>)\big\vert_{\Pi}\ne\{1\}$ and
$\Lambda_0\not\in N\big\vert_{\Pi}$ (for the latter we have also
used Corollary~\ref{theorem:cor-res-d}).

\begin{lemma}
\label{theorem:final-lemma}
$\xi(N\cap\left<\slo_n\right>)\vartriangleleft\au_{n - 1}$.
\end{lemma}

\begin{proof}
Indeed, we have

\begin{itemize}

    \item $\left<\slo_n\right>\big\vert_{\Pi} := \xi(\left<\slo_n\right>) = \au_{n-1}$ (see Proposition~\ref{theorem:sub-group-surj}),

    \item $\Lambda\in\slo_n$ and $\Lambda_0 =
\xi(\Lambda)\not\in\xi(N\cap\left<\slo_n\right>)=(N\cap\left<\slo_n\right>)\big\vert_{\Pi}$
because $\Lambda_0\not\in N\big\vert_{\Pi}$,

    \item $N\cap\left<\slo_n\right>\not\subset
\kr(\xi)$.

\end{itemize}

This shows that $\xi(N\cap\left<\slo_n\right>)\ne\{1\}$,
$\au_{n-1}$, i.\,e.
$\xi(N\cap\left<\slo_n\right>)\vartriangleleft\au_{n - 1}$.
\end{proof}

Lemma~\ref{theorem:final-lemma} finishes the proof of
Theorem~\ref{theorem:main}.

\bigskip

\section{Final comments}
\label{section:theorem-proofs}

The way we used the groups $G$, $\li_n$, $\slo_n$, etc. to prove
Theorem~\ref{theorem:main} makes it reasonable to develop the
preceding arguments more systematically and study other subgroups
in $\cg_n$ which ``\,behave expectedly at infinity\,". Let us
advocate this thesis by proving the following:

\begin{proposition}[{cf. \cite[{\bf 5.1}]{cerveau-deserti}}]
\label{theorem:g-is-not-embeddable} For any, not necessarily
algebraically closed field ${\bf k}\subset\com$, the group $\cg_n$
is not embedable into $GL_m(\com)$ for all $m \in
\na\cup\{\infty\}$ and $n \geq 2$.\footnote{~After the text has
been written, I was informed by S.\,Cantat about
http://perso.univ-rennes1.fr/serge.cantat/Articles/cnl-5.jpg,
where a similar statement had been proved (via a
group\,-\,theoretic argument) for every \emph{finite} $m \in
\na$.}
\end{proposition}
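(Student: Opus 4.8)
The plan is to prove that $\cg_n$ cannot embed into any $\gl_m(\com)$ by producing, inside $\cg_n$, a finitely-generated subgroup whose structure is incompatible with linearity. The natural tool is the \emph{Tits alternative} together with standard finiteness/Noetherianity properties of linear groups: a finitely generated linear group over a field of characteristic $0$ is residually finite, and its subgroups satisfy the ascending chain condition up to the constraints imposed by linearity; moreover linear groups cannot contain certain ``infinitely divisible'' or ``infinitely iterated commutator'' configurations. So the strategy is to exhibit an explicit element (or a pair of elements) of $\cg_n$ generating a group that violates one of these properties.

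First I would reduce to the case $n=2$, since $\cg_2 \hookrightarrow \cg_n$ for all $n \geq 2$ (extend a plane Cremona map by the identity on the remaining coordinates), and a subgroup of a linear group is linear; thus a non-embeddability obstruction for $\cg_2$ transports upward. Next I would locate inside $\cg_2$ a concrete subgroup that is manifestly non-linear. The cleanest candidate exploits the fact, visible already in this paper's framework (cf. Example~\ref{example:ex-1-1-0-n} and the homomorphism $\rho$), that $\cg_n$ contains copies of lattices such as $SL_n(\cel)$ acting by monomial maps, as well as unipotent one-parameter families of the form $X_1 \mapsto X_1 + P(X_2,\ldots,X_n)$ coming from triangular automorphisms. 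Combining a monomial element with a polynomial shear produces elements of unbounded ``dynamical degree'' and, more usefully, produces subgroups isomorphic to wreath-type or lamplighter-type groups, or to $\com$ regarded as a divisible abelian subgroup. The key point is that $(\com,+)$ embeds in $\cg_n$ via $t \mapsto (X_1 + tX_2,\, X_2,\ldots)$, and $(\com,+)$ is a divisible group of uncountable rank, hence cannot embed into $\gl_m(\com)$ for finite $m$ because a divisible torsion-free abelian linear group has bounded $\ra$-rank $\leq m$, whereas $\com$ has infinite $\ra$-rank.

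For the finite-$m$ case this already settles matters: I would argue that $\cg_n$ contains $(\com,+)$ whose $\ra$-vector-space dimension is $2^{\aleph_0}$, while any torsion-free abelian subgroup of $\gl_m(\com)$ embeds into the additive/multiplicative structure of an $m$-dimensional commutative algebra and hence has $\ra$-rank at most $m^2$, a contradiction. The delicate case is $m=\infty$, where this rank bound evaporates and $\gl_\infty(\com)$ contains $(\com,+)$ freely; here I would instead invoke a \emph{non-residual-finiteness} or \emph{simplicity} obstruction. The natural move is to use the element $\Lambda$ and the normal subgroup $N \vartriangleleft \li_n$ constructed above, or more robustly the theorem that $\cg_n$ contains finitely generated subgroups $\Gamma$ (e.g. lattices in higher-rank Lie groups, by Margulis) which are \emph{just-infinite} or satisfy super-rigidity, so that any linear representation $\Gamma \to \gl_\infty(\com)$ would have to factor through a finite-dimensional one and thus fail to be faithful on $\cg_n$.

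The hard part will be the $m=\infty$ case: ruling out embeddings into $\gl_\infty(\com)$ requires more than a rank count, because $\gl_\infty$ is genuinely large and absorbs many countable groups. My intended resolution is to combine two facts. First, I would show that $\cg_n$ contains a copy of a group $\Gamma$ for which every linear representation (of arbitrary, even infinite, dimension over $\com$) is not faithful --- for instance an infinite simple finitely generated subgroup, or a group with no faithful linear action by virtue of containing infinitely many commuting copies of a non-linearizable configuration. Second, I would note that an embedding $\cg_n \hookrightarrow \gl_\infty(\com)$ restricts to a faithful representation of every such $\Gamma$, contradicting the first point. Constructing a single finitely generated subgroup of $\cg_2$ that is simultaneously \emph{infinite, simple or just-infinite, and non-linear over $\com$ in all dimensions} is precisely the crux; I expect to draw this from the existence of non-linear finitely generated subgroups of $\cg_2$ (available through Danilov-type or Mennicke-type constructions, cf. \cite{danilov}) and the observation that linearity is inherited by subgroups, so exhibiting one non-linear finitely generated $\Gamma \subset \cg_2 \subset \cg_n$ suffices to obstruct embeddability of the ambient group for every $m$, finite or infinite.
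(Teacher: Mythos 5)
Your finite-$m$ argument rests on a false claim. The group $(\com,+)$ \emph{does} embed into $\gl_m(\com)$ already for $m=2$: the map $t\mapsto\left(\begin{smallmatrix}1&t\\0&1\end{smallmatrix}\right)$ is a faithful homomorphism, so $\gl_2(\com)$ contains a torsion-free divisible abelian subgroup of $\ra$-rank $2^{\aleph_0}$. There is simply no bound on the $\ra$-rank of abstract abelian subgroups of $\gl_m(\com)$ (your bound $m^2$ would be meaningful for the dimension of an algebraic subgroup, not for the rank of an abstract subgroup). Consequently, exhibiting $(\com,+)$ or any divisible abelian group inside $\cg_n$ obstructs nothing, and the entire finite-$m$ part of your proposal collapses.

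What remains is the part you yourself call the crux --- producing a finitely generated subgroup of $\cg_2$ admitting no faithful $\com$-linear representation --- and this is never carried out. The tools you list do not fill the gap: Margulis superrigidity cannot help, since higher-rank lattices are themselves linear and their presence in $\cg_n$ is therefore no obstruction; an infinite finitely generated simple subgroup of $\cg_2$ is not constructed (and \cite{danilov} supplies a proper normal subgroup of $\au_2$, not such a simple group). So your proposal reduces to restating what has to be proved. For comparison, the paper performs exactly this missing step concretely: it takes $E=\left<g_1\right>^{\oplus k}\rtimes\left<g_2\right>\subset G\subset\cg_n$ with $g_1\in\dia_n$ an involution and $g_2\in SL'_n(\cel)$ unipotent, reduces $m=\infty$ to finite $m$, and then rules out $E\subset\gl_m(\com)$ by comparing asymptotic invariants following \cite{gromov-2}: a linear embedding would force the asymptotic cone $\mathrm{Con}_{\infty}(E)$ to be totally disconnected, whereas the $\cel=\left<g_2\right>$-action (on the Berkovich spectrum) makes $(E,\di_E)$ quasi-isometric to $\cel$, whence $\mathrm{Con}_{\infty}(E)=\re$, a contradiction. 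Your overall skeleton --- a finitely generated non-linear subgroup obstructs all $m$, including $m=\infty$, because finitely generated subgroups of $\gl_{\infty}(\com)$ lie in some finite $\gl_m(\com)$, and linearity passes to subgroups --- agrees with the paper's, but neither of your two concrete mechanisms is valid, so there is a genuine gap.
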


\begin{proof}
Take $g_1\in\dia_n$ and $g_2\in SL'_n(\cel)$ any unipotent
element. Consider the group $E := \left<g_1, g_2\right>\subset G$.
Let us also suppose that $g_1^2 = 1$. We can always choose $g_1,
g_2$ in such a way that $E = \left<g_1\right>^{\oplus k} \rtimes
\left<g_2\right>$ for some $2\leq k \leq n$.

\begin{lemma}
\label{theorem:g-is-not-embeddable} $E$ is not embeddable into
$GL_m(\com)$ for any $m \in \na\cup\{\infty\}$.
\end{lemma}

\begin{proof}
Here we follow the paper \cite{gromov-2}. Suppose that $E \subset
GL_m(\com)$ for some $m$.

Consider a word metric $\di_{E}$ on $E$ and the corresponding
metric space $(E,\di_{E})$. Then, since the extension
$\left<g_1\right>^{\oplus k} \rtimes \left<g_2\right>\subset
GL_m(\com)$ is non\,-\,trivial, we may assume that $2 \leq m <
\infty$, which gives a natural isometric embedding
$\big(E,\di_{E}) \hookrightarrow (\left<\left<g_1\right>^{\oplus
k}, S^1\right> \rtimes\re,\di\big)$, for
$\log\big(\di\big\vert_{E}\big) = \di_{E}$, such that
$(\left<\left<g_1\right>^{\oplus k}, S^1\right>
\rtimes\re,\di\big)$ is a hyperbolic space and $s \times \re$ is a
horocycle for all $s \in \left<\left<g_1\right>^{\oplus k},
S^1\right>$ (see \cite[{\bf 2.B},\,(d),\,(e),\,(f)]{gromov-2}). In
particular, since $g_1^2 = 1$ and $\left<g_2\right> = \cel\subset
\re$, this implies that $\text{Con}_{\infty}(E)$, the
\emph{asymptotic cone} of $E$ (with induced metric), is totally
disconnected (\emph{op. cit.}).

On the other hand, since $g_2^a \circ g \circ g_2^b = g' \circ
g_2^{c}$ for all $a, b \in\cel$, $g\in\left<g_1\right>^{\oplus k}$
and some $c := c(a, b) \in \cel$, $g' :=
g'(g)\in\left<g_1\right>^{\oplus k}$, the group $E\subset\li_n$
(obviously) acts as $\cel = \left<g_2\right>$ on the Berkovich
spectrum of $\com^n$ (cf. \cite[Section 5]{max-yura}). In
particular, we obtain that $(E,\di_E)$ is q.\,-\,i. to $\cel$ with
the corresponding word metric (see \cite[{\bf 0.2.C}]{gromov-2}),
which implies that $\text{Con}_{\infty}(E) = \re$ with the usual
metric (see \cite[{\bf 2.B}, (a)]{gromov-2}). This contradicts the
previous paragraph.
\end{proof}

Lemma~\ref{theorem:g-is-not-embeddable} proves
Proposition~\ref{theorem:g-is-not-embeddable}.
\end{proof}

\begin{remark}
\label{remark:comment-2} It would be interesting to construct
examples of algebraic varieties $X$ over a number field $F$ for
which the above non\,-\,embeddability result for $\cg_n$ provides
a non\,-\,trivial obstruction to rationality of $X$ over $F$.
\end{remark}

Finally, Corollary~\ref{theorem:rho-hom} relates $\li_n$ to
hyperbolic groups and groups with small cancellation (cf.
\cite{cou}), which together with results of
Section~\ref{section:theorem-group-e-n} makes one ask the next

\begin{question}
\label{theorem:stupid} Let $G$ be a group such that $\f_2
\subseteq \mathrm{Out}\,G$. Is $G$ non\,-\,simple?
\end{question}

Unfortunately, the answer to Question~\ref{theorem:stupid} is
negative in general, as the case of the group $G :=
PGL_{n+1}(\com)$ (with $\f_2\subset
\mathrm{Gal}(\com/\ra)\subseteq \mathrm{Out}\,G$)
shows.\footnote{~Pointed out by M.\,Gromov.} However, the latter
indicates an interesting difference between the groups
$\widetilde{\li_n}$ and $PGL_{n+1}(\com)$, which together with the
proof of Theorem~\ref{theorem:main} suggests a way to attack the
(non\,-)\,simplicity of $\cg_n$ for all $n \geq 2$ (basically, one
constructs a normal subgroup $N\subseteq\cg_n$\footnote{~Provided
that $\f_2\subseteq \mathrm{Out}\,\cg_n$. For example, one may
attain this for $\mathrm{Gal}(\com/\ra)\subseteq
\mathrm{Out}(\cg_n)$ (cf. \cite{cerveau-deserti}), but then the
construction of $N$ may be no longer valid.} exactly as in
Section~\ref{section:intermedia-group} above (cf.
Remark~\ref{remark:g-e-g-ne}), and tries to show that $N \ne
\cg_n$, arguing as in Section~\ref{section:proof-of-theorem} for
instance).

\bigskip

\thanks{{\bf Acknowledgments.} The work owes much to the Hurricane Irene (August, 2011), due to which
I was stuck in Boston for couple of extra days. I am grateful to
A.\,Postnikov for hospitality. Also thanks to F.\,Bogomolov,
M.\,Gizatullin, and A.\,Khovanskii for their patience and
encouragement. Partial financial support was provided by CRM
fellowship and NSERC grant.

\bigskip

\end{document}